\theoremstyle{plain}
\newtheorem{theorem}{Theorem}[section]
\newtheorem{corollary}{Corollary}[section]
\newtheorem{definition}{Definition}[section]
\newtheorem{lemma}{Lemma}[section]
\newtheorem{proposition}{Proposition}[section]
\theoremstyle{definition}
\newtheorem{remark}{Remark}[section]
\numberwithin{equation}{section}
\begin{document}
\title[Distributional Versions of Littlewood's Theorem]{Distributional Versions of Littlewood's Tauberian Theorem}
\author{Ricardo Estrada}
\address{Department of Mathematics, Louisiana State University, Baton Rouge, LA 70803, USA.}
\email{restrada@math.lsu.edu}
\author{Jasson Vindas}
\address{Department of Mathematics\\ Ghent University\\ Krijgslaan 281 Gebouw S22\\ B 9000 Gent\\ Belgium}
\email{jvindas@cage.Ugent.be}
\thanks{R. Estrada gratefully acknowledges support from NSF, through grant number 0968448.}
\thanks{J. Vindas gratefully acknowledges support by a Postdoctoral Fellowship of the Research Foundation--Flanders (FWO, Belgium)}
\subjclass[2000]{Primary 40E05, 40G10, 44A10, 46F12. Secondary 40G05, 46F20}
\keywords{Tauberian theorems; Laplace transform; the converse of Abel's theorem; Littlewood's Tauberian theorem; Abel and Ces\`{a}ro summability; distributional Tauberian theorems; asymptotic behavior of generalized functions}
\begin{abstract} We provide several general versions of Littlewood's Tauberian theorem. These versions are applicable to Laplace transforms of Schwartz distributions. We apply these Tauberian results to deduce a number of Tauberian theorems for power series where Ces\`{a}ro summability follows from Abel summability. We also use our general results to give a new simple proof of the classical Littlewood one-sided Tauberian theorem for power series.
\end{abstract}

\maketitle

\section{Introduction}
\label{il}

A century ago, Littlewood obtained his celebrated extension of Tauber's theorem \cite{tauber,littlewood}. Littlewood's Tauberian theorem states that if the series $\sum_{n=0}^{\infty} c_n$ is Abel summable to the number $a$, namely, the power series $\sum^{\infty}_{n=0}c_nr^n$ has radius of convergence at least $1$ and 
\begin{equation}
\label{ileq1}
\lim_{r\rightarrow 1^-}\sum^{\infty}_{n=0}r^nc_n=a \ ,
\end{equation}  
and if the Tauberian hypothesis 
\begin{equation}
\label{ileq2}
c_n=O\left(\frac{1}{n}\right)
\end{equation}
is satisfied, then the series is actually convergent,  $\sum_{n=0}^{\infty} c_n=a$.

The result was later strengthened by Hardy and Littlewood in \cite{hardy-littlewood1,hardy-littlewood2} to an one-sided version.  They showed that the condition (\ref{ileq2}) can be relaxed to the weaker one $nc_n=O_L\left(1\right)$, i.e., there exists $C>0$ such that 
\begin{equation*}
-C<nc_n \ .
\end{equation*} 

The aim of this article is to provide several distributional versions of this Hardy-Littlewood Tauberian theorem, our versions shall include it as a particular case. Our general results are in terms of Laplace transforms of distributions, and they have interesting consequences when applied to Stieltjes integrals and numerical series. In particular, we shall provide various Tauberian theorems where the conclusion is Ces\`{a}ro (or Riesz) summability rather than convergence. 

We state a sample of our results. The ensuing theorem will be derived in Section \ref{l1ns} (cf. Corollary \ref{lc5}). 
In order to state it, we need to introduce some notation. We shall write 
\begin{equation*}
b_n=O_{\textnormal{L}}(1) \ \ \ (\mathrm{C},m)
\end{equation*}
if the Ces\`{a}ro means of order $m\geq1$ of a sequence $\left\{b_n\right\}^{\infty}_{n=0}$ (not to be confused with the ones of a series) are bounded from below, namely, there is a constant $K>0$ such that
\begin{equation*}-K<\frac{m!}{n^m}\sum^{n}_{k=0}
\binom
{k+m-1}{
m-1}
b_{n-k}.
\end{equation*} 
 
\begin{theorem}
\label{ilth1}
If $\sum_{n=0}^{\infty}c_{n}=a\ $ $(\mathrm{A})$,  then the Tauberian condition 
\begin{equation}
\label{ileq5}
nc_n=O_{\textnormal{L}}(1) \ \ \ (\mathrm{C},m) \ .
\end{equation}
 implies the $(\mathrm{C},m)$ summability of the series, $\sum_{n=0}^{\infty}c_{n}=a\ $ $(\mathrm{C},m)$.
\end{theorem}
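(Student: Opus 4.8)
The plan is to transfer the assertion into the setting of Laplace transforms of distributions, where it becomes a particular instance of a general one-sided Tauberian theorem for Laplace transforms; once the correspondence is set up, essentially all of the work is carried by that general result. First I would attach to the series the generalized function $g$ given by $g(x)=\sum_{0\le n\le x}c_{n}$ for $x\ge 0$ and $g(x)=0$ for $x<0$, viewed as a distribution supported in $[0,\infty)$. Its distributional derivative is the jump measure $g'(x)=\sum_{n=0}^{\infty}c_{n}\delta(x-n)$, so that $xg'(x)=\sum_{n=0}^{\infty}nc_{n}\delta(x-n)$. The radius-of-convergence part of the hypothesis means that $\mathcal{L}\{g';y\}=\sum_{n=0}^{\infty}c_{n}e^{-ny}$ converges for $y>0$, and the assumption $\sum_{n=0}^{\infty}c_{n}=a$ $(\mathrm{A})$ is precisely the statement that this Laplace transform tends to $a$ as $y\to 0^{+}$.

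Next I would rephrase both Ces\`{a}ro statements distributionally. On the conclusion side, $(\mathrm{C},m)$ summability of $\sum c_{n}$ to $a$ is, by definition, the asymptotic relation $g(x)=a+o(1)$ $(\mathrm{C},m)$ as $x\to\infty$, that is, the Ces\`{a}ro behavior of the primitive of $g'$. On the hypothesis side, I would verify that $nc_{n}=O_{\mathrm{L}}(1)$ $(\mathrm{C},m)$ is equivalent to the statement that the distribution $xg'(x)$ is bounded from below in the $(\mathrm{C},m)$ sense at infinity. This is the one genuinely computational point: the iterated partial sums $\sum_{k=0}^{n}\binom{k+m-1}{m-1}(n-k)c_{n-k}$ that appear in the definition of $nc_{n}=O_{\mathrm{L}}(1)$ $(\mathrm{C},m)$ coincide, up to an error of order $O(n^{m-1})$, with the value at $x=n$ of the $m$-th order primitive of $xg'(x)$; since that error is negligible for a $(\mathrm{C},m)$ comparison, a one-sided bound on one side yields one on the other.

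With this dictionary in place, the theorem drops out by applying the general distributional version of Littlewood's theorem to $f=g$: the Laplace transform $\mathcal{L}\{f';y\}$ tends to $a$ as $y\to 0^{+}$ and $xf'(x)=O_{\mathrm{L}}(1)$ $(\mathrm{C},m)$ at infinity, whence $f(x)=a+o(1)$ $(\mathrm{C},m)$, which unwinds back to $\sum_{n=0}^{\infty}c_{n}=a$ $(\mathrm{C},m)$. I expect the main obstacle to lie in the general theorem itself rather than in this reduction; within the present argument, the one delicate spot is the discrete-to-continuous comparison of Ces\`{a}ro means above, where care is needed to ensure that \emph{no} Ces\`{a}ro order is lost, so that a $(\mathrm{C},m)$ Tauberian bound on the sequence $\{nc_{n}\}$ produces a $(\mathrm{C},m)$ --- and not merely a $(\mathrm{C},m+1)$ --- conclusion. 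The remaining steps, namely the Abelian reading of $(\mathrm{A})$ summability and the unwinding of the distributional asymptotics, are routine.
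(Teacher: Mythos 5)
Your overall plan---attach to the series the step function $g(x)=\sum_{n\le x}c_n$, so that $g'=\sum_n c_n\delta(x-n)$ and $xg'=\sum_n nc_n\delta(x-n)$, then invoke the distributional one-sided Tauberian theorem (Theorem \ref{lth4} with $b=0$)---is the same reduction the paper makes, via the chain Theorem \ref{lth4} $\to$ Corollary \ref{lc3} $\to$ Corollary \ref{lc4} $\to$ Corollary \ref{lc5}. The Abelian reading of $(\mathrm{A})$ summability and the unwinding of $g(x)=a+o(1)$ $(\mathrm{C},m)$ into $(\mathrm{C},m)$ summability of $\sum c_n$ (after the standard Riesz--Ces\`{a}ro equivalence theorem) are indeed routine. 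So you identified the correct general theorem and the correct substitution.

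The genuine gap is exactly at the step you flagged as ``the one delicate spot,'' and your proposed way through it does not hold up. You claim that
\[
\sum_{k=0}^{n}\binom{k+m-1}{m-1}(n-k)c_{n-k}\quad\text{and}\quad (xg')^{(-m)}(n)=\frac{1}{(m-1)!}\sum_{j\le n}(n-j)^{m-1}\,jc_j
\]
differ by $O(n^{m-1})$. Writing both sums with the index $j=n-k$, the difference equals $\frac{1}{(m-1)!}\sum_{j=0}^n R(n-j)\,jc_j$ where $R(u)=(u+1)\cdots(u+m-1)-u^{m-1}$ is a polynomial of degree $m-2$. This is a lower-order Riesz mean of the sequence $\{jc_j\}$, and under the hypotheses of the theorem there is \emph{no a priori two-sided bound} on such lower-order means; it is not $O(n^{m-1})$ in general. (Moreover, even for a one-sided transfer one would need a two-sided bound on the discrepancy.) So the discrete-to-continuous Ces\`{a}ro comparison cannot be dispatched by an error estimate of this type.

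The paper's Lemma \ref{ll1} does something structurally different: writing $x=n+\vartheta$ with $0\le\vartheta<1$, it establishes the \emph{exact identity}
\[
T_{m-1}(x)\;=\;\sum_{k=0}^{m-1}p^{m-1}_{k}(\vartheta)\,B_m(n-k),
\]
where $T_{m-1}(x)=\sum_{j\le x}(x-j)^{m-1}b_j$ is the Riesz sum, $B_m(n)=n^m C_m\{b_k;n\}$ is (a multiple of) the Ces\`{a}ro sum, and the $p^{m-1}_{k}$ are polynomials determined by a generating-function recursion. The content of the lemma---which is the point where the paper ``adds new information'' to Hardy's Theorem 58---is that each $p^{m-1}_{k}(\vartheta)\ge 0$ for $\vartheta\in[0,1]$, proved by induction on $m$ from the recursion $p^{m}_{k}(\vartheta)=(k+\vartheta)p^{m-1}_{k}(\vartheta)+(m-k+1-\vartheta)p^{m-1}_{k-1}(\vartheta)$. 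Because the coefficients are non-negative, the one-sided lower bound $B_m(n-k)\ge -K(n-k)^m\ge -Kn^m$ transfers directly to $T_{m-1}(x)\ge -K'x^m$, with no error term at all. This is the ingredient your proof needs and does not supply; without it, the Tauberian condition $nc_n=O_{\mathrm{L}}(1)$ $(\mathrm{C},m)$ has not been shown to imply $xg'(x)=O_{\mathrm{L}}(1)$ $(\mathrm{C},m)$ at the same Ces\`{a}ro order, and Theorem \ref{lth4} cannot be applied.
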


Tauberian theorems in which Ces\`{a}ro summability follows from Abel summability have a long tradition, which goes back to Hardy and Littlewood \cite{littlewood,hardy-littlewood1931}. Such results have also received much attention in recent times, e.g., \cite{canak-e-t,pati2005}. Actually, Pati and \c{C}anak et al have made extensive use of  
Tauberian conditions involving the Ces\`{a}ro means of $nc_{n}$, such as (\ref{ileq5}), in the study of Tauberian theorems for the so called $(\mathrm{A})(\mathrm{C},\alpha)$ summability.

We would like to point out that there is an extensive literature in Tauberian theorems for Schwartz distributions, an overview can be found in \cite{p-s-v, vladimirov-d-z1}. Extensions of the Wiener Tauberian theorem have been obtained in \cite{peetre,pilipovic-stankovic2,pilipovic-stankovic3} (cf. \cite{p-s-v}). Recent applications to the theory of Fourier and conjugate series are considered in \cite{estrada-vindasFC}. We also mention that the results of this article are closely related to those from \cite{estrada-vindasT,vindas-estrada4}, though with a different approach.

For future purposes, it is convenient to restate Hardy-Littlewood theorem in a form which is invariant under addition of terms of the form $n^{-1}M$. Set $b_{0}=c_{0}$, write $b_n=c_n+C/n$, for $n>0$, and $r=e^{-y}$.  Then (\ref{ileq1}) transforms into
\begin{equation*}
\sum^{\infty}_{n=0}b_ne^{-ny}=-C\log(1-e^{-y})+\sum^{\infty}_{n=0}c_ne^{-ny} 
=a+C\log\left(\frac{1}{y}\right)+o(1)\ ,
\end{equation*}
while the convergence conclusion translates into $\sum^{N}_{n=0}b_n=a+C\gamma+C\log N+o(1)$, $N\rightarrow\infty$, where $\gamma$ is the Euler gamma constant. 
Therefore, Hardy-Littlewood theorem might be formulated as follows.

\begin{theorem}
\label{lth1}
Let $\sum^{\infty}_{n=0}c_ne^{-ny}$ be convergent for $y>0$.  Suppose that 
\begin{equation}
\label{ileq4}
\lim_{y\rightarrow0^+}\sum^{\infty}_{n=0}c_ne^{-ny}-b\log\left(\frac{1}{y}\right)=a\ .
\end{equation}
Then, the Tauberian hypothesis $nc_{n}=O_{\textnormal{L}}(1)$
implies that 
\begin{equation}
\label{ileq6}
\sum^{N}_{n=0}c_n=a+b\gamma+b\log N+o(1)\ ,  \ \ \ N\rightarrow\infty \ .
\end{equation}
\end{theorem}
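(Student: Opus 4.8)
The plan is to deduce the statement from one of the general Tauberian theorems for Laplace transforms proved in the body of this paper, after transcribing the numerical series as a Laplace-Stieltjes transform and relocating the Tauberian content into a positivity statement.

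First I would dispose of the one-sided hypothesis using the same substitution already employed above to reformulate the Hardy-Littlewood theorem: put $d_{0}=c_{0}$ and $d_{n}=c_{n}+C/n$ for $n\geq1$, so that $nc_{n}=O_{\textnormal{L}}(1)$ becomes $d_{n}\geq0$. Since $-\log(1-e^{-y})=\log(1/y)+o(1)$ as $y\to0^{+}$, the hypothesis (\ref{ileq4}) turns into $\sum_{n=0}^{\infty}d_{n}e^{-ny}=a+(b+C)\log(1/y)+o(1)$, and since $\sum_{n=0}^{N}d_{n}=\sum_{n=0}^{N}c_{n}+C\log N+C\gamma+o(1)$, the asserted conclusion (\ref{ileq6}) for $\{c_{n}\}$ is equivalent to the same conclusion for $\{d_{n}\}$ with $b$ replaced by $b+C$. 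Hence it suffices to treat the case $c_{n}\geq0$ for all $n\geq1$; then automatically $b\geq0$, because $\sum_{n=0}^{\infty}c_{n}e^{-ny}\geq c_{0}$ stays bounded below while $a+b\log(1/y)\to-\infty$ if $b<0$.

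Now set $s(x)=\sum_{0\leq n\leq x}c_{n}$ for $x\geq0$, a nondecreasing, right-continuous step function. The crude bound $c_{n}\leq e\,c_{n}e^{-n/x}$ for $0\leq n\leq x$ gives $s(x)\leq e\sum_{n}c_{n}e^{-n/x}=O(\log x)$ by the hypothesis, and this growth estimate justifies the integration by parts
\begin{equation*}
\sum_{n=0}^{\infty}c_{n}e^{-ny}=\int_{0^{-}}^{\infty}e^{-xy}\,ds(x)=y\int_{0}^{\infty}e^{-xy}s(x)\,dx,
\end{equation*}
so that (\ref{ileq4}) becomes $y\int_{0}^{\infty}e^{-xy}s(x)\,dx=a+b\log(1/y)+o(1)$ as $y\to0^{+}$. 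Monotonicity of $s$ --- equivalently, nonnegativity of the measure $ds$ --- is precisely the one-sided Tauberian hypothesis of our general Laplace transform theorem, and applying it produces $s(x)=b\log x+(a+b\gamma)+o(1)$ as $x\to\infty$; the additive constant $a+b\gamma$ is forced by the normalization $y\int_{0}^{\infty}e^{-xy}\log x\,dx=\log(1/y)-\gamma$ (equivalently, by the identity $\int_{0}^{\infty}e^{-t}\log t\,dt=-\gamma$). Reading this at $x=N$ gives $\sum_{n=0}^{N}c_{n}=a+b\gamma+b\log N+o(1)$, and undoing the reduction of the previous paragraph completes the proof.

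If one prefers an argument independent of the general machinery, the same asymptotics are reachable by a two-stage Karamata argument: testing the measure $ds$ against linear combinations of exponentials $e^{-kt}$ yields first the slowly varying behavior $s(x)\sim b\log x$ (the classical Karamata Tauberian theorem of index zero, which does use $ds\geq0$), and a refined, second-order (de Haan-type) version of the same comparison then pins down the constant $a+b\gamma$. The main obstacle is exactly this second stage: once the leading term is subtracted, $s(x)-b\log x$ is no longer monotone, so upgrading the integral information $y\int_{0}^{\infty}e^{-xy}\big(s(x)-b\log x-a-b\gamma\big)\,dx=o(1)$ to the pointwise statement $s(x)-b\log x-a-b\gamma=o(1)$ is where the nonnegativity of $ds$ must be exploited in a genuinely Tauberian way. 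The remaining points --- moving between the lattice points $n$ and general $x$, and tracking Euler's constant through the substitutions --- are routine.
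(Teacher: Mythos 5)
Your proof is correct and takes essentially the same route as the paper: reduce the Tauberian hypothesis to monotonicity by the $C/n$ shift (exactly the normalization the paper uses when deriving Theorem \ref{lth1} from Hardy--Littlewood), pass to the step function $s(x)=\sum_{n\le x}c_n$ with $\mathcal{L}\{\mathrm{d}s;y\}=\sum c_n e^{-ny}$, and invoke the one-sided Stieltjes Tauberian theorem (the paper's Theorem \ref{lc2}, whose hypothesis $s(x)+A\log x$ non-decreasing even makes your preliminary shift optional) to obtain $s(x)=a+b\gamma+b\log x+o(1)$. The auxiliary checks you supply --- the $O(\log x)$ bound justifying integration by parts and the identity $\int_0^\infty e^{-t}\log t\,\mathrm{d}t=-\gamma$ pinning down the constant --- are correct and match the role those facts play inside the paper's proof of Theorem \ref{lc2}.
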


Theorem \ref{lth1} is precisely the form of Littlewood's theorem which we will generalize to distributions. The plan of this article is as follows. In Section \ref{lp} we explain the notions from distribution theory to be used in this paper. Section \ref{l2} provides a two-sided distributional version of Littlewood's theorem. We shall use such a version to produce a simple proof of the classical Littlewood one-sided theorem. We give a one-sided Tauberian theorem for Laplace transforms of distributions in Section \ref{l1} and then discuss some applications to Stieltjes integrals and numerical series; as an example we extend a classical theorem of Sz\'{a}sz \cite{szasz3}.  

\section{Preliminaries and Notation}
\label{lp}
\subsection{Distributions} The spaces of test functions and distributions $\mathcal{D}(\mathbb{R})$, $\mathcal{S}(\mathbb{R})$, $\mathcal{D'}(\mathbb{R})$, and $\mathcal{S'}(\mathbb{R})$ are well known for most analysts, we refer to \cite{schwartz,vladimirovbook} for their properties. We denote by $\mathcal{S}[0,\infty)$ the space of restrictions of test functions from $\mathcal{S}(\mathbb{R})$ to the interval $[0,\infty)$; its dual space $\mathcal{S}'[0,\infty)$ is canonically isomorphic \cite{vladimirovbook} to the subspace of distributions from $\mathcal{S'}(\mathbb{R})$ having supports in $[0,\infty)$.

We shall employ several special distributions, we follow the notation exactly as in \cite{estrada-kanwal2}. For instance, $\delta$ is as usual the Dirac delta, $H$ is the Heaviside function, i.e., the characteristic function of $[0,\infty)$, the distributions $x_{+}^{\beta-1}$ are simply given by $x^{\beta-1}H(x)$
whenever $\Re e \:\beta>0$, and $\operatorname*{Pf}(H(x)/x)$ is defined via Hadamard finite part regularization, i.e.,
$$
\left\langle \operatorname*{Pf} \left(\frac{H(x)}{x}\right),\phi(x)\right\rangle= \int_{0}^{1}\frac{\phi(x)-\phi(0)}{x}\mathrm{d}x+\int_{1}^{\infty}\frac{\phi(x)}{x}\mathrm{d}x\ .
$$

\subsection{Ces\`{a}ro Limits}
\label{lcl}
We refer to \cite{estrada4,estrada-kanwal2} for the Ces\`{a}ro behavior of distributions. We will only consider Ces\`{a}ro limits. Given $f\in\mathcal{D'}(\mathbb{R})$ with support bounded at the left, we write

\begin{equation}
\label{lpeq1}
\lim_{x\to\infty}f(x)=\ell \ \ \ (\mathrm{C},m)
\end{equation}
if $f^{(-m)}$, the $m$-primitive of $f$ with support bounded at the left, is an ordinary function for large arguments and
$$
f^{(-m)}(x)\sim \frac{\ell x^{m}}{m!}\ , \ \ \ x\to\infty\ . 
$$ 
Observe that $f^{(-m)}$ is given by the convolution \cite{vladimirovbook}
\begin{equation*}
f^{(-m)}=f \ast \frac{x_{+}^{m-1}}{(m-1)!}\ .
\end{equation*}
If we do not want to make any reference to $m$ in (\ref{lpeq1}), we simply write $(\mathrm{C})$. In the special case when $f=s$ is a function of local bounded variation with $s(x)=0$ for $x<0$, then (\ref{lpeq1}) reads as
$$
\lim_{x\rightarrow\infty}\int^{x}_{0}\left(1-\frac{t}{x}\right)^m \mathrm{d}s(t)=s(0)+\ell\ .
$$
Thus, if $s$ is given by the partial sums of a series $\sum_{n=0}^{\infty}c_{n}$, this notion amounts to the same as $\sum_{n=0}^{\infty}c_{n}=\ell$ $(\mathrm{C},m)$, as shown by the equivalence between Ces\`{a}ro and Riesz summability \cite{hardy,ingham}.
\subsection{Laplace Transforms} Let $f\in\mathcal{D'}(\mathbb{R})$ be supported in $[0,\infty)$, it is said to be
Laplace transformable \cite{schwartz} on $\Re e\:z>0$ if $e^{-yx}f \in\mathcal{S}'(\mathbb{R})$ is a tempered distribution for all $y>0$. In such a case its Laplace transform is well defined on the half-plane $\Re e\:z>0$ and it is given by the evaluation
$$
\mathcal{L}\left\{f;z\right\}=\left\langle f(x),e^{-zx}\right\rangle\ .
$$
If $f=s$ is a function of local bounded variation, then one readily verifies that it is Laplace transformable on $\Re e\:z>0$ in the distributional sense if and only if 
\begin{equation}
\label{2talli}
\mathcal{L}\left\{\mathrm{d}s;y\right\}:=\int_{0}^{\infty}e^{-yx}\mathrm{d}s(x) \ \ \ (\mathrm{C}) \ \ \ \text{exists for each}\ y>0\ ,
\end{equation}
Thus, Laplace transformability in this context is much more general than the mere existence of Laplace-Stieltjes improper integrals. Observe also that the order of $(\mathrm{C})$ summability might quickly change in (\ref{2talli}) with each $y$.

\subsection{Distributional Asymptotics} We shall make use of the theory of asymptotic expansions of distributions, explained for example in \cite{estrada-kanwal2,p-s-v,vindas1,vindas-pilipovic1}. For instance, let $f,g_{1},g_{2}\in\mathcal{S}'(\mathbb{R})$ and let $c_{1}$ and $c_{2}$ be two positive functions such that $c_{2}(\lambda)=o(c_{1}(\lambda))$, $\lambda\to\infty$. The asymptotic formula 
$$
f(\lambda x)= c_{1}(\lambda)g_{1}(x)+c_{2}(\lambda)g_{2}(x)+o(c_{2}(\lambda)) \ \ \ \text{as} \ \lambda\rightarrow\infty \ \ \text{in} \ \mathcal{S}'(\mathbb{R}) \ ,
$$
is interpreted in the distributional sense, namely, it means that for all test functions $\phi\in\mathcal{S}(\mathbb{R})$
$$
\left\langle f(\lambda x),\phi(x)\right\rangle= c_{1}(\lambda)\left\langle g_{1}(x),\phi(x)\right\rangle+c_{2}(\lambda)\left\langle g_{2}(x),\phi(x)\right\rangle+o(c_{2}(\lambda)) \ .
$$
\section{Distributional Littlewood two-sided Tauberian Theorem}
We want to find a distributional analog to (\ref{ileq6}).  Set 
$s(x)=\sum_{n<x}c_n,$
then (\ref{ileq6}) gives $s(x)=a+b\gamma+b\log x+o(1).$
It is now easy to prove \cite[Lem 3.9.2]{estrada-kanwal2} that the previous ordinary expansion implies the distributional expansion 
\begin{equation*}
s(\lambda x)=(a+b\gamma)H(x)+bH(x)\log(\lambda x)+o(1) \ \ \ \text{as} \ \lambda\rightarrow\infty \ \ \text{in} \ \mathcal{S}'(\mathbb{R}) \ ;
\end{equation*}
differentiating \cite{estrada-kanwal2}, we obtain
\begin{equation*}
s'(\lambda x)=(a+b\gamma+ b\log\lambda)\frac{\delta(x)}{\lambda}+\frac{b}{\lambda}\operatorname*{Pf}\left(\frac{H(x)}{x}\right)+o\left(\frac{1}{\lambda}\right) \ \ \ \text{as} \ \lambda\rightarrow\infty \ \ \text{in} \ \mathcal{S'}(\mathbb{R}) \ .
\end{equation*}

The above distributional asymptotic relation is the one which we will mostly study in this article. In Subsection \ref{la} we give an Abelian theorem related to it. We give a two-sided Tauberian converse in Subsection \ref{l2} that will be used to produce a new proof of Hardy-Littlewood theorem in the form of Theorem \ref{lth1}. The study of more general one-sided Tauberian conditions will be postponed to Section \ref{l1}.

\subsection{The Abelian Theorem}
\label{la}

We begin with the following Abelian theorem for Laplace transforms of distributions.

\begin{theorem}
\label{lth2}
Let $g\in\mathcal{S'}(\mathbb{R})$ be supported in $[0,\infty)$ and have the distributional asymptotic behavior 
\begin{equation}
\label{laeq1}
g(\lambda x)=a\frac{\delta(x)}{\lambda}+b\frac{\log\lambda}{\lambda}\delta(x)+\frac{b}{\lambda}\operatorname*{Pf}\left(\frac{H(x)}{x}\right)+o(1) \ \ \ \text{as} \ \lambda\rightarrow\infty 
\end{equation}
in $\mathcal{S'}(\mathbb{R})$.
Then,
\begin{equation}
\label{laeq2}
\mathcal{L}\left\{g;y\right\}=a-b\gamma+b\log\left(\frac{1}{y}\right)+o(1)\ , \ \ \ y\rightarrow0^+\ .
\end{equation}
\end{theorem}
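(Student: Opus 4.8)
The plan is to compute the Laplace transform by testing the distributional asymptotic relation \eqref{laeq1} against a suitable function. The natural candidate is $\phi(x)=e^{-x}$, which however is not in $\mathcal{S}(\mathbb{R})$ because it is not rapidly decreasing on the negative axis; since $g$ is supported in $[0,\infty)$, though, one can multiply by a fixed cutoff $\eta\in\mathcal{D}(\mathbb{R})$ equal to $1$ near the support (i.e. on $[0,\infty)$ up to a neighborhood of $0$ from the left) so that $\phi(x)=\eta(x)e^{-x}\in\mathcal{S}(\mathbb{R})$ and $\langle g(x),e^{-yx}\rangle=\langle g(x),\phi(yx)\rangle$ for $y>0$. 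First I would substitute $\lambda=1/y$ and apply \eqref{laeq1} with this test function, which gives
\begin{equation*}
\mathcal{L}\{g;y\}=\langle g(\lambda x),\phi(x)\rangle\cdot\lambda=a\,\phi(0)+b\log\lambda\,\phi(0)+b\left\langle\operatorname*{Pf}\left(\frac{H(x)}{x}\right),\phi(x)\right\rangle+o(1)\ ,\qquad\lambda\to\infty\ .
\end{equation*}

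Next I would evaluate the three pairings. Since $\phi(0)=1$, the first two terms contribute $a+b\log\lambda=a+b\log(1/y)$. The remaining task is to compute $\langle \operatorname*{Pf}(H(x)/x),\phi(x)\rangle$ with $\phi(x)=\eta(x)e^{-x}$, and to show it equals $-\gamma$ (so that the constant becomes $a-b\gamma$ as claimed). Using the definition of the finite part given in the preliminaries, and the fact that $\eta\equiv 1$ on $[0,\infty)$ outside an irrelevant negative neighbourhood, this pairing reduces to
\begin{equation*}
\int_{0}^{1}\frac{e^{-x}-1}{x}\,\mathrm{d}x+\int_{1}^{\infty}\frac{e^{-x}}{x}\,\mathrm{d}x\ ,
\end{equation*}
which is the classical integral representation of $-\gamma$ (equivalently, $\int_0^\infty e^{-x}\log x\,\mathrm dx=-\gamma$ after an integration by parts). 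This is a standard special-function computation and not a real obstacle.

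The one genuinely delicate point is the justification that the distributional asymptotic expansion \eqref{laeq1}, which by definition holds when tested against arbitrary $\phi\in\mathcal{S}(\mathbb{R})$, may legitimately be applied with the specific $\phi(x)=\eta(x)e^{-x}$ and, more importantly, that the error term $o(1)$ in the resulting numerical expansion is uniform enough to survive as $y\to 0^{+}$. Since $\phi$ here is a fixed element of $\mathcal{S}(\mathbb{R})$ (independent of $\lambda$), the $o(c_2(\lambda))=o(1)$ in the definition applies directly for this $\phi$, so in fact there is no uniformity issue at all — the conclusion \eqref{laeq2} follows immediately once the cutoff reduction and the $-\gamma$ computation are in place. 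I would therefore structure the proof as: (i) the cutoff reduction showing $\mathcal{L}\{g;y\}=\lambda\langle g(\lambda x),\phi(x)\rangle$ with $\phi\in\mathcal{S}(\mathbb{R})$ and $\lambda=1/y$; (ii) insertion of \eqref{laeq1}; (iii) the elementary evaluation $\langle\operatorname*{Pf}(H(x)/x),\eta(x)e^{-x}\rangle=-\gamma$; and (iv) collecting terms. The main thing to be careful about is step (iii) and the bookkeeping of the $\log\lambda$ versus $\log(1/y)$ terms and of the sign of $\gamma$.
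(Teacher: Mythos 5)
Your proposal is correct and follows essentially the same route as the paper: substitute $\lambda=1/y$, pair the asymptotic relation with $e^{-x}$, and use the finite-part identity $\mathrm{F.p.}\int_0^\infty e^{-x}\,\mathrm{d}x/x=-\gamma$ to collect the constant $a-b\gamma$. The only cosmetic difference is that you make explicit the cutoff $\eta$ so that $\eta(x)e^{-x}\in\mathcal{S}(\mathbb{R})$, whereas the paper implicitly works in $\mathcal{S}'[0,\infty)$ where $e^{-x}$ is already admissible; both handle this in the standard way, and both read the error term in \eqref{laeq1} as $o(1/\lambda)$ (evidently a typo for $o(1)$ in the displayed statement), which is what makes the final $o(1)$ in \eqref{laeq2} come out.
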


\begin{proof}
Writing $\lambda=y^{-1}$, we have, as $\lambda\to\infty$,
\begin{align*}
\mathcal{L}\left\{g;\lambda^{-1}\right\}&
=\lambda\left\langle g(\lambda x),e^{-x}\right\rangle
\\
&
=(a+b\log\lambda)\left\langle \delta(x),e^{-x}\right\rangle+b\left\langle \operatorname*{Pf}\left(\frac{H(x)}{x}\right),e^{-x}\right\rangle+o(1)
\\
&
=a+b\log\lambda+b\:\mathrm{F.p.}\int^{\infty}_{0}\frac{e^{-x}}{x}\:\mathrm{d}x+o(1)
\\
&
=a+b\log\lambda-b\gamma+o(1)\ .
\end{align*}
\end{proof}

\begin{corollary}
\label{lc1}
Let $s$ be a function of local bonded variation such that $s(x)=0$ for $x\leq0$.
If
\begin{equation}
\label{laeq3}
\lim_{x\rightarrow\infty}\left(s(x)-b\log x\right)=a \ \ \ (\mathrm{C})\ ,
\end{equation}
then, $\mathcal{L}\left\{\mathrm{d}s;y\right\}:=\int^{\infty}_{0}e^{-yx}\mathrm{d}s(x)$ is $(\mathrm{C})$ summable for each $y>0$, and 
\begin{equation}
\label{laeq4}
\mathcal{L}\left\{\mathrm{d}s;y\right\}=a-b\gamma+b\log\left(\frac{1}{y}\right)+o(1) \ , \ \ \ y\rightarrow0^+\ .
\end{equation}
\end{corollary}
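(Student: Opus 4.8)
The plan is to read Corollary \ref{lc1} as the special case of \thmref{lth2} obtained by taking $g$ to be the distributional derivative $s'$ of $s$ (which, as a distribution, coincides with the Stieltjes differential $\mathrm{d}s$); essentially all the work then lies in converting the Ces\`{a}ro hypothesis (\ref{laeq3}) into the distributional dilation asymptotics (\ref{laeq1}).

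First I would check that $g=s'$ is a tempered distribution supported in $[0,\infty)$---hence Laplace transformable on $\Re e\:z>0$---and that its Laplace transform is the $(\mathrm{C})$-summable integral $\int_{0}^{\infty}e^{-yx}\,\mathrm{d}s(x)$. Indeed, by the definition of the Ces\`{a}ro limit in Subsection \ref{lcl}, hypothesis (\ref{laeq3}) says that for some $m$ the $m$-primitive of $f:=s-bH\log x$ is, for large $x$, an ordinary function asymptotic to $ax^{m}/m!$; since $f$ is itself a locally integrable function whose $m$-primitive is therefore polynomially bounded, we get $f\in\mathcal{S}'(\mathbb{R})$, and as $H(x)\log x\in\mathcal{S}'(\mathbb{R})$ it follows that $s\in\mathcal{S}'(\mathbb{R})$ and hence $s'\in\mathcal{S}'(\mathbb{R})$. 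By the remark surrounding (\ref{2talli}), this is exactly the assertion that $\mathcal{L}\{\mathrm{d}s;y\}$ is $(\mathrm{C})$ summable for each $y>0$, and integration by parts identifies it with $\langle s'(x),e^{-yx}\rangle=\mathcal{L}\{s';y\}$.

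Next I would produce (\ref{laeq1}). With $f=s-bH\log x$ as above, (\ref{laeq3}) reads $\lim_{x\to\infty}f(x)=a$ $(\mathrm{C})$, which by the standard correspondence between an ordinary Ces\`{a}ro limit at infinity and the distributional behaviour under dilation (the $(\mathrm{C})$ version of \cite[Lem.\ 3.9.2]{estrada-kanwal2}) is equivalent to $f(\lambda x)=aH(x)+o(1)$ in $\mathcal{S}'(\mathbb{R})$ as $\lambda\to\infty$. Since $H(\lambda x)=H(x)$ and $\log(\lambda x)=\log\lambda+\log x$ for $\lambda>0$, this rewrites as $s(\lambda x)=aH(x)+bH(x)\log(\lambda x)+o(1)$ in $\mathcal{S}'(\mathbb{R})$. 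Differentiating---a continuous operation on $\mathcal{S}'(\mathbb{R})$, so it may be performed termwise---and using $\frac{\mathrm{d}}{\mathrm{d}x}s(\lambda x)=\lambda\,s'(\lambda x)$, $H'=\delta$, and the elementary identity $\frac{\mathrm{d}}{\mathrm{d}x}\bigl(H(x)\log x\bigr)=\operatorname*{Pf}(H(x)/x)$, one obtains precisely (\ref{laeq1}) with $g=s'$. \thmref{lth2} then gives $\mathcal{L}\{s';y\}=a-b\gamma+b\log(1/y)+o(1)$ as $y\to0^{+}$, which is (\ref{laeq4}), completing the argument.

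The main obstacle is the equivalence invoked in the previous paragraph: rigorously passing from the scalar statement $\lim_{x\to\infty}f(x)=a$ $(\mathrm{C})$ to the $\mathcal{S}'(\mathbb{R})$-asymptotics $f(\lambda x)=aH(x)+o(1)$. The natural route is to write $f=(f^{(-m)})^{(m)}$, move the $m$ derivatives onto the test function, insert the pointwise asymptotics $f^{(-m)}(\lambda x)=a(\lambda x)^{m}H(x)/m!+o(\lambda^{m})$, and justify the interchange of limit and integration by a uniform bound on the remainder on the support of the $m$-th derivative of the test function (using $(x^{m}H(x))^{(m)}=m!\,H(x)$). Everything else---the termwise differentiation, the finite-part identity, and the temperedness bookkeeping in the first step---is routine.
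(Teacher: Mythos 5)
Your proposal is correct and follows essentially the same route as the paper's own proof: set $g=s'$, upgrade the Ces\`{a}ro hypothesis (\ref{laeq3}) to the dilation asymptotics $s(\lambda x)=aH(x)+bH(x)\log(\lambda x)+o(1)$ in $\mathcal{S}'(\mathbb{R})$ via \cite{estrada-kanwal2}, differentiate to obtain (\ref{laeq1}), and apply Theorem~\ref{lth2}. You additionally spell out the temperedness of $s'$ and the $(\mathrm{C})$-summability of $\mathcal{L}\{\mathrm{d}s;y\}$, details the paper leaves implicit, but these are bookkeeping and do not constitute a different method.
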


\begin{proof}
Set $g=s'$. The Ces\`{a}ro limit (\ref{laeq3}) implies \cite{estrada-kanwal2} that  $s(\lambda x)=aH(x)+bH(x)\log (\lambda x)+o(1)$ as $\lambda\rightarrow\infty$ in $\mathcal{S'}(\mathbb{R})$. Differentiating, we conclude that $g$ satisfies (\ref{laeq1}), and so, by Theorem \ref{lth2}, we deduce (\ref{laeq4}). 
\end{proof}

In particular if we consider $s(x)=\sum_{n< x}c_{n}$, we obtain that (\ref{ileq6}) implies (\ref{ileq4}), the Abelian counterpart of Theorem \ref{lth1}.

We end this subsection by pointing out that (\ref{laeq1}) is the most general asymptotic separation of variables we could have in the situation that we are studying. The proof of the following proposition follows from the general results from \cite{estrada-kanwal93}.

\begin{proposition}
\label{lp1}
Let $g\in\mathcal{S'}(\mathbb{R})$ be supported in $[0,\infty)$.  If there are $g_1,g_2\in\mathcal{S'}(\mathbb{R})$ such that 
\begin{equation*}
g(\lambda x)=\frac{\log\lambda}{\lambda}g_1(x)+\frac{1}{\lambda}g_2(x)+o\left(\frac{1}{\lambda}\right)\ \ \ \text{as} \ \lambda\rightarrow\infty \
\text{in} \ \mathcal{S'}(\mathbb{R})\ ,
\end{equation*}
then $g_1(x)=b\delta(x)$ and $g_2(x)=a\delta(x)+b\operatorname*{Pf}(H(x)/x)$, for some constants $a$ and $b$. Consequently, $g$ has the distributional asymptotic behavior (\ref{laeq1}).
\end{proposition}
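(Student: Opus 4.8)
The plan is to reduce the statement to a rigidity/structure result for distributions possessing a quasiasymptotic expansion with respect to the scale $\{\lambda^{-1}\log\lambda,\lambda^{-1}\}$, which is exactly the type of situation analyzed in \cite{estrada-kanwal93}. The key observation is that since $g$ is supported in $[0,\infty)$, the quasiasymptotic $g(\lambda x)=(\log\lambda)\lambda^{-1}g_{1}(x)+\lambda^{-1}g_{2}(x)+o(\lambda^{-1})$ forces both $g_{1}$ and $g_{2}$ to be supported at the origin, hence to be finite linear combinations of $\delta$ and its derivatives. Indeed, testing the asymptotic relation against any $\phi\in\mathcal{S}(\mathbb{R})$ vanishing in a neighbourhood of $0$, the left-hand side $\langle g(\lambda x),\phi(x)\rangle=\lambda^{-1}\langle g(x),\phi(x/\lambda)\rangle$ is, for large $\lambda$, an evaluation of $g$ against a function supported far out in $[0,\infty)$; comparing orders in $\lambda$ one concludes $\langle g_{1},\phi\rangle=\langle g_{2},\phi\rangle=0$, so $\operatorname{supp}g_{1},\operatorname{supp}g_{2}\subseteq\{0\}$.

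The next step is a homogeneity (self-similarity) argument to pin down the possible $g_{1},g_{2}$ among the distributions supported at $0$. Replacing $\lambda$ by $a\lambda$ in the expansion and using $\log(a\lambda)=\log\lambda+\log a$, one gets two expressions for the same quasiasymptotic; matching the coefficient of $(\log\lambda)\lambda^{-1}$ shows $g_{1}(x/a)=a\,g_{1}(x)$ in the appropriate sense, i.e. $g_{1}$ is homogeneous of degree $-1$. Among distributions supported at the origin, the only ones homogeneous of degree $-1$ are the multiples of $\delta$, whence $g_{1}=b\delta$ for some constant $b$. Feeding this back and matching the $\lambda^{-1}$ terms yields a relation of the form $g_{2}(x/a)=a\,g_{2}(x)+a(\log a)\,b\,\delta(x)$; this is precisely the functional equation satisfied by $\operatorname{Pf}(H(x)/x)$ (modulo $\delta$), and one checks that the general solution supported in $[0,\infty)$ is $g_{2}=a\delta+b\operatorname{Pf}(H(x)/x)$ for a further constant $a$. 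All of this is contained in the classification of quasiasymptotics with respect to regularly varying functions in \cite{estrada-kanwal93}, so in the write-up I would simply invoke that reference for the structure theorem rather than redo the homogeneity bookkeeping.

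Once $g_{1}$ and $g_{2}$ are identified, the final assertion that $g$ has the asymptotic behavior (\ref{laeq1}) is immediate: substituting $g_{1}=b\delta$ and $g_{2}=a\delta+b\operatorname{Pf}(H(x)/x)$ into the hypothesized expansion and using $(\log\lambda)\lambda^{-1}\cdot b\delta(x)+\lambda^{-1}(a\delta(x)+b\operatorname{Pf}(H(x)/x))=a\lambda^{-1}\delta(x)+b\lambda^{-1}(\log\lambda)\delta(x)+b\lambda^{-1}\operatorname{Pf}(H(x)/x)$ reproduces (\ref{laeq1}) verbatim. The main obstacle is the middle step: correctly solving the inhomogeneous scaling functional equation for $g_{2}$ and verifying that no exotic solutions (e.g. involving $\operatorname{Pf}(H(x)/x)$ plus other homogeneous-of-degree-$(-1)$ pieces, or associated homogeneous distributions of higher order) are permitted once the support-in-$[0,\infty)$ constraint is imposed — but this is exactly what the cited machinery from \cite{estrada-kanwal93} handles, so the real work is bookkeeping rather than genuinely new analysis.
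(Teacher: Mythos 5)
The strategy you outline — asymptotic separation of variables via the scaling substitution $\lambda\mapsto a\lambda$, followed by a structure/homogeneity classification, with a pointer to \cite{estrada-kanwal93} — is exactly the route the paper takes (the paper in fact just cites \cite{estrada-kanwal93} and gives no further argument). However, your first step contains a genuine error. You claim that the hypothesis forces both $g_1$ and $g_2$ to be supported at the origin, hence finite linear combinations of $\delta$ and its derivatives. This is false, and you can see it is false directly from the conclusion of the proposition: $g_2(x)=a\delta(x)+b\operatorname*{Pf}(H(x)/x)$ has support equal to $[0,\infty)$ whenever $b\neq0$, not $\{0\}$. The argument you give for the claim also does not work: if $\phi$ is supported in $(\epsilon,\infty)$ then $\langle g(\lambda x),\phi\rangle=\lambda^{-1}\langle g(x),\phi(x/\lambda)\rangle$ is indeed an evaluation of $g$ far out on the positive axis, but nothing in the hypotheses bounds $\langle g(x),\phi(x/\lambda)\rangle$ — for $g=\operatorname*{Pf}(H(x)/x)$, for instance, this quantity equals $\int_0^\infty\phi(u)/u\,\mathrm{d}u\neq0$ for all $\lambda$. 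Your reasoning does, on the other hand, work for $\phi$ supported in $(-\infty,0)$, where $\langle g(\lambda x),\phi\rangle$ vanishes identically; this yields the correct (and weaker) conclusion that $\operatorname{supp}g_1,\operatorname{supp}g_2\subseteq[0,\infty)$.

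The rest of the proof survives once you replace "supported at $\{0\}$" by "supported in $[0,\infty)$." The scaling argument correctly yields $g_1(ax)=a^{-1}g_1(x)$, and one then needs the fact that the only tempered distribution homogeneous of degree $-1$ with support in $[0,\infty)$ (not just with support in $\{0\}$) is a multiple of $\delta$: on $(0,\infty)$ such a distribution is $c/x$, so it differs from $c\operatorname*{Pf}(H(x)/x)$ by something supported at $0$, but $\operatorname*{Pf}(H(x)/x)$ fails to be homogeneous (it is only associated homogeneous), which forces $c=0$ and leaves $d\delta$. Your functional equation for $g_2$ is the right idea, although you have a sign slip: the scaling relation is $g_2(ax)=a^{-1}(\log a)\,g_1(x)+a^{-1}g_2(x)$, equivalently $g_2(x/a)=-a(\log a)\,b\,\delta(x)+a\,g_2(x)$, whereas you wrote the wrong sign on the $\log a$ term. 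In a final write-up you should drop the "supported at $\{0\}$" claim entirely (it is both false and not needed), record only $\operatorname{supp}g_i\subseteq[0,\infty)$, and then carry out the classification among distributions supported on $[0,\infty)$; alternatively, as the paper does, simply invoke the structure theorem for asymptotic separation of variables from \cite{estrada-kanwal93} without the ad hoc support argument.
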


\subsection{Functions and the Distributional Asymptotics (\ref{laeq1})}
\label{lf}

We shall prove that if $s$ is non-decreasing and $s'$ has the distributional asymptotic behavior (\ref{laeq1}), then one recovers the asymptotic behavior (\ref{laeq3}) in the ordinary sense.

\begin{proposition}
\label{lp2}
Let $s\in L^{1}_{\textnormal{loc}}(\mathbb{R})$ be supported in $[0,\infty)$.  If there exist $A,B>0$ such that $s(x)+A\log x$ is non-decreasing on the interval $[B,\infty)$ and 
\begin{equation}
\label{lfeq1}
s'(\lambda x)=a\frac{\delta(x)}{\lambda}+b\frac{\log\lambda}{\lambda}\delta(x)+\frac{b}{\lambda}\operatorname*{Pf}\left(\frac{H(x)}{x}\right) +o\left(\frac{1}{\lambda}\right) \ ,
\end{equation}
as $\lambda\rightarrow\infty$ in $\mathcal{S'}(\mathbb{R})$, then
\begin{equation}
\label{lfeq2}
\lim_{x\rightarrow\infty}(s(x)-b\log x)=a \ .
\end{equation}
\end{proposition}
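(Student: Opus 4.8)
The plan is to convert the distributional hypothesis (\ref{lfeq1}) into an ordinary statement about smoothed dilated averages of $s$, and then to run a one-sided (slowly decreasing) Tauberian argument driven by the monotonicity of $s(x)+A\log x$. First I would test (\ref{lfeq1}) against the special test functions $\phi(x)=\int_{x}^{\infty}\rho(t)\,\mathrm{d}t$, where $\rho\geq 0$ is smooth, supported in a compact subinterval of $(0,\infty)$, and normalized by $\int_{0}^{\infty}\rho(t)\,\mathrm{d}t=1$; such a $\phi$ is smooth on $[0,\infty)$, vanishes past the support of $\rho$, hence lies in $\mathcal{S}[0,\infty)$, and satisfies $\phi(0)=1$ and $\phi'=-\rho$. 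Since $s'$ is supported in $[0,\infty)$, relation (\ref{lfeq1}) holds in $\mathcal{S}'[0,\infty)$, so it may be paired with these $\phi$. Combining $\langle s'(\lambda x),\phi(x)\rangle=-\lambda^{-1}\int_{0}^{\infty}s(\lambda x)\phi'(x)\,\mathrm{d}x$ with the identity (integration by parts on the finite part)
\begin{equation*}
\left\langle \operatorname*{Pf}\left(\frac{H(x)}{x}\right),\phi(x)\right\rangle=\mathrm{F.p.}\int_{0}^{\infty}\frac{\phi(x)}{x}\,\mathrm{d}x=\int_{0}^{\infty}\rho(x)\log x\,\mathrm{d}x\ ,
\end{equation*}
multiplying (\ref{lfeq1}) by $\lambda$ and subtracting $b\int_{0}^{\infty}\log(\lambda x)\rho(x)\,\mathrm{d}x$ (recall $\int\rho=1$) turns it into
\begin{equation*}
\int_{0}^{\infty}\bigl(s(\lambda x)-b\log(\lambda x)\bigr)\rho(x)\,\mathrm{d}x=a+o(1)\ ,\qquad \lambda\to\infty\ ,
\end{equation*}
valid for every admissible kernel $\rho$.

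Next, set $v(x)=s(x)-b\log x$ and $w(x)=s(x)+A\log x=v(x)+(A+b)\log x$. The second hypothesis says precisely that $w$ is non-decreasing on $[B,\infty)$ (fix the corresponding monotone representative of $s$); therefore, for $\mu\geq B$, we have $v(\mu)\leq v(\mu x)+(A+b)\log x$ whenever $x\geq 1$, and $v(\mu)\geq v(\mu x)+(A+b)\log x$ whenever $0<x\leq 1$ and $\mu x\geq B$. This is the slowly decreasing condition adapted to the multiplicative structure of the problem.

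Now I would perform the Tauberian squeeze. Fix $\varepsilon\in(0,1)$, and in the averaged limit above take an admissible kernel $\rho$ supported in $[1,1+\varepsilon]$. Integrating $v(\mu)\leq v(\mu x)+(A+b)\log x$ against $\rho(x)\,\mathrm{d}x$ gives
\begin{equation*}
v(\mu)\leq\int_{0}^{\infty}v(\mu x)\rho(x)\,\mathrm{d}x+(A+b)\int_{0}^{\infty}(\log x)\rho(x)\,\mathrm{d}x\ ,
\end{equation*}
whence $\limsup_{\mu\to\infty}v(\mu)\leq a+|A+b|\log(1+\varepsilon)$; symmetrically, a kernel supported in $[1-\varepsilon,1]$ produces $\liminf_{\mu\to\infty}v(\mu)\geq a+|A+b|\log(1-\varepsilon)$. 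Letting $\varepsilon\to 0^{+}$ yields $\lim_{x\to\infty}v(x)=a$, which is (\ref{lfeq2}).

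The step requiring care --- and which I regard as the crux --- is the first one: one must check that the test functions of the form $-\phi'$, $\phi\in\mathcal{S}[0,\infty)$, already furnish enough nonnegative unit-mass kernels $\rho$ with support in arbitrarily short intervals $[1,1+\varepsilon]$ and $[1-\varepsilon,1]$, for it is exactly these kernels that make the one-sided argument go through with nothing assumed on $s$ beyond $s\in L^{1}_{\mathrm{loc}}$. Once the averaged limit is available, the remainder is the classical slowly-decreasing argument and poses no real difficulty.
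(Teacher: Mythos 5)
Your proof is correct, and it uses the same underlying idea as the paper (test the distributional asymptotic against test functions concentrated near $x=1$, then use the monotonicity hypothesis to pass from smoothed averages to pointwise values), but the execution differs in two respects. First, the paper tests against compactly supported plateau functions $\phi_{1},\phi_{2}\in\mathcal{D}(\mathbb{R})$ with $0\leq\phi_{j}\leq 1$, whereas you test against mollified step functions $\phi(x)=\int_{x}^{\infty}\rho(t)\,\mathrm{d}t$; your $\phi$ does not lie in $\mathcal{S}(\mathbb{R})$, and the technical point you correctly flag --- that pairing against $\mathcal{S}[0,\infty)$ is permissible --- is resolved because every distribution appearing in (\ref{lfeq1}) is supported in $[0,\infty)$, and that subspace of $\mathcal{S}'(\mathbb{R})$ is canonically identified with $\mathcal{S}'[0,\infty)$ (the paper avoids this subtlety altogether by keeping its test functions in $\mathcal{D}(\mathbb{R})\subset\mathcal{S}(\mathbb{R})$). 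Second, you keep the constant $A$ visible and run a classical slowly-decreasing squeeze on $w=s+A\log$, first isolating the intermediate statement $\int_{0}^{\infty}\bigl(s(\lambda x)-b\log(\lambda x)\bigr)\rho(x)\,\mathrm{d}x\to a$; the paper instead reduces WLOG to $s$ non-decreasing and $s(0)=0$, and then bounds $s(\lambda)$ directly by $\int\phi_{j}(x/\lambda)\,\mathrm{d}s(x)$ using positivity of $\mathrm{d}s$. Your route is a little more ``classical Tauberian'' in flavor and avoids the WLOG normalization, while the paper's is more compact because it works with the measure $\mathrm{d}s$ directly; both are sound.
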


\begin{proof}
We may assume that $s(0)=0$ and that $s$ is non-decreasing on the whole $\mathbb{R}$. Let $\varepsilon$ be an arbitrary small number.  Pick $\phi_1,\phi_2\in\mathcal{D}(\mathbb{R})$ such that $0\leq\phi_j\leq 1$, $\operatorname*{supp}\phi_2\subseteq[-1,1+\varepsilon]$, $\phi_2(x)=1$ for $x\in[0,1]$, $\operatorname*{supp}\phi_1\subseteq[-1,1]$ and $\phi_1(x)=1$ for $x\in[-1,1-\varepsilon]$.  Evaluating (\ref{lfeq1}) at $\phi_2$ we have
\begin{align*}
\limsup_{\lambda\rightarrow\infty}\left(s(\lambda)-b\log\lambda\right)&
\leq\lim_{\lambda\rightarrow\infty}\left(\int^{\infty}_{0}\phi_2\left(\frac{x}{\lambda}\right)\mathrm{d}s(x)-b\log\lambda\right)
\\
&
=a+b\:\mathrm{F.p}\int^{\varepsilon+1}_{0}\phi_2(x)\frac{\mathrm{d}x}{x}=a+b\int^{\varepsilon+1}_{1}\phi_2(x)\frac{\mathrm{d}x}{x}
\\
&
\leq a+b\varepsilon\ .
\end{align*}
Likewise, evaluation at $\phi_1$ yields 
\begin{align*}
\liminf_{\lambda\rightarrow\infty}\left(s(\lambda)-b\log\lambda\right)
&\geq a+b\:\mathrm{F.p}\int^{\infty}_{0}\phi_1(x)\frac{\mathrm{d}x}{x}=a+b\int^{1}_{1-\varepsilon}\frac{\phi_1(x)-1}{x}\mathrm{d}x
\\
&
\geq a+b\log(1-\varepsilon)\ .
\end{align*}
Since $\varepsilon$ was arbitrary, we conclude (\ref{lfeq2}).
\end{proof}

\subsection{Distributional two-sided Tauberian Theorem}
\label{l2}

We now show our first distributional version of Littlewood Tauberian theorem.  It is the Tauberian converse of Theorem \ref{lth2}.  Since we use the big $O$ symbol in the Tauberian hypothesis, we denominate it a two-sided Tauberian theorem.

\begin{theorem}
\label{lth3}
Let $g\in\mathcal{S'}(\mathbb{R})$ be supported on $[0,\infty)$.  Suppose that, as $y\rightarrow0^+$, 
\begin{equation}
\label{l2eq1}
\mathcal{L}\left\{g;y\right\}=a+b\log\left(\frac{1}{y}\right)+o(1) \ .
\end{equation}
Then, the Tauberian hypothesis 
\begin{equation}
\label{l2eq2}
g(\lambda x)-b\log\lambda\frac{\delta(x)}{\lambda}=O\left(\frac{1}{\lambda}\right) \ ,
\end{equation}
implies the distributional asymptotic behavior
\begin{equation}
\label{l2eq3}
g(\lambda x)=(a+b\gamma)\frac{\delta(x)}{\lambda}+b\frac{\log\lambda}{\lambda}\delta(x)+\frac{b}{\lambda}\operatorname*{Pf}\left(\frac{H(x)}{x}\right)+o(1) \ .
\end{equation}
\end{theorem}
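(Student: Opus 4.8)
The plan is to reduce the statement to a model-free Tauberian theorem and then run a Karamata-type argument based on polynomial approximation.

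First I would subtract an explicit model distribution. Put $g_{0}=(a+b\gamma)\delta+b\operatorname*{Pf}(H(x)/x)$. Since $\bigl(H(x)\log x\bigr)'=\operatorname*{Pf}(H(x)/x)$ and $\langle\operatorname*{Pf}(H(x)/x),e^{-yx}\rangle=-\gamma-\log y$ (compare the computation in the proof of Theorem~\ref{lth2}), one has $\mathcal{L}\{g_{0};y\}=a+b\log(1/y)$ for every $y>0$, while the homogeneity-with-logarithm of $\operatorname*{Pf}(H(x)/x)$ yields, \emph{exactly},
$$
g_{0}(\lambda x)=(a+b\gamma)\frac{\delta(x)}{\lambda}+b\frac{\log\lambda}{\lambda}\delta(x)+\frac{b}{\lambda}\operatorname*{Pf}\left(\frac{H(x)}{x}\right).
$$
Hence, writing $h=g-g_{0}$, the theorem becomes the following \emph{model-free} statement: if $h\in\mathcal{S}'(\mathbb{R})$ is supported in $[0,\infty)$, if $\mathcal{L}\{h;y\}=o(1)$ as $y\to0^{+}$, and if $h(\lambda x)=O(1/\lambda)$ in $\mathcal{S}'(\mathbb{R})$, then $h(\lambda x)=o(1/\lambda)$ in $\mathcal{S}'(\mathbb{R})$. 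From now on I work with $h$ and pair against $\mathcal{S}[0,\infty)$.

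The Tauberian hypothesis means precisely that $\{\lambda h(\lambda x):\lambda\geq\lambda_{0}\}$ is a bounded subset of $\mathcal{S}'(\mathbb{R})$; since $\mathcal{S}(\mathbb{R})$ is barrelled, this family is equicontinuous, so there is a continuous seminorm $p$ on $\mathcal{S}[0,\infty)$ with $|\lambda\langle h(\lambda x),\phi\rangle|\leq p(\phi)$ for all $\phi$ and all $\lambda\geq\lambda_{0}$. Next, for each $k>0$ the dilation identity gives $\lambda\langle h(\lambda x),e^{-kx}\rangle=\mathcal{L}\{h;k/\lambda\}\to0$, whence $\lambda\langle h(\lambda x),\phi\rangle\to0$ for every $\phi(x)=P(e^{-x})$ with $P$ a polynomial satisfying $P(0)=0$. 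The remaining, and decisive, step is to approximate an arbitrary $\phi\in\mathcal{S}[0,\infty)$ by such exponential polynomials \emph{in the topology of} $\mathcal{S}[0,\infty)$. Setting $t=e^{-x}$, the rapid decay of $\phi$ and all its derivatives shows that $\psi(t):=\phi(-\log t)$ extends to a $C^{\infty}$ function on $[0,1]$ vanishing to infinite order at $t=0$; choosing polynomials $P_{n}$ with $P_{n}(0)=0$ and $P_{n}\to\psi$ in $C^{\infty}[0,1]$ and putting $\phi_{n}(x)=P_{n}(e^{-x})$, one checks that $\phi_{n}\to\phi$ in $\mathcal{S}[0,\infty)$: on each interval $[0,R]$ this follows from the $C^{\infty}$ convergence $P_{n}\to\psi$, while for $x\geq R$ every derivative satisfies $|\phi_{n}^{(\beta)}(x)|\leq C_{\beta}e^{-x}$ with $C_{\beta}$ independent of $n$ (because the $P_{n}^{(j)}$, $j\leq\beta$, are uniformly bounded on $[0,1]$), so that $\sup_{x\geq R}x^{\alpha}|(\phi-\phi_{n})^{(\beta)}(x)|$ is small uniformly in $n$ once $R$ is large. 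Granted this, fix $\varepsilon>0$, pick $n$ with $p(\phi-\phi_{n})<\varepsilon/2$, and estimate
$$
|\lambda\langle h(\lambda x),\phi\rangle|\leq|\lambda\langle h(\lambda x),\phi_{n}\rangle|+p(\phi-\phi_{n})<|\lambda\langle h(\lambda x),\phi_{n}\rangle|+\frac{\varepsilon}{2}\ ;
$$
since the first term tends to $0$, we get $\lambda\langle h(\lambda x),\phi\rangle\to0$, i.e.\ $h(\lambda x)=o(1/\lambda)$, and unwinding the reduction gives (\ref{l2eq3}).

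I expect the approximation step to be the main obstacle: one must approximate a general Schwartz test function by functions of the form $P(e^{-x})$ not merely uniformly but in the full Fréchet topology of $\mathcal{S}[0,\infty)$, which is exactly what forces the uniform-in-$n$ tail bounds above. The preliminary reduction to $h$ is what keeps this clean; working directly with $g$ one would additionally have to absorb the factor $b\log\lambda$, which would require also matching $\phi_{n}(0)=\phi(0)$ in the approximation.
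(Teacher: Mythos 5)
Your preliminary step of subtracting the model distribution $g_{0}=(a+b\gamma)\delta+b\operatorname*{Pf}(H(x)/x)$ is correct and elegant: the dilation identity $\operatorname*{Pf}\left(H(\lambda x)/(\lambda x)\right)=\lambda^{-1}\operatorname*{Pf}(H(x)/x)+\lambda^{-1}\log\lambda\,\delta(x)$ and the Laplace value $\langle\operatorname*{Pf}(H(x)/x),e^{-yx}\rangle=-\gamma-\log y$ do give the exact identities you state, and the reduction to the model-free statement about $h$ is valid. Your overall architecture — equicontinuity of $\{\lambda h(\lambda x)\}$ from Banach--Steinhaus, convergence to $0$ on a dense set of test functions, and conclusion — is exactly the skeleton of the paper's proof.

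However, your justification of density has a genuine gap. You assert that for $\phi\in\mathcal{S}[0,\infty)$ the function $\psi(t)=\phi(-\log t)$ extends to a $C^{\infty}$ function on $[0,1]$ vanishing to infinite order at $t=0$. This is false in general. Computing $\psi'(t)=-t^{-1}\phi'(-\log t)=-e^{x}\phi'(x)$ with $x=-\log t$, one sees that $\psi'$ has a finite limit at $t=0^{+}$ only if $\phi'$ decays at least like $e^{-x}$, and the $k$-th derivative of $\psi$ involves $e^{kx}\phi^{(k)}(x)$ and so requires super-exponential decay of $\phi$. Schwartz functions on $[0,\infty)$ need only decay faster than every polynomial. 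For instance a smooth $\phi$ with $\phi(x)=e^{-(\log(2+x))^{2}}$ for large $x$ is rapidly decreasing in the Schwartz sense, yet $e^{x}\phi'(x)\to\infty$, so $\psi\notin C^{1}[0,1]$. Thus you cannot choose $P_{n}\to\psi$ in $C^{\infty}[0,1]$, and without that the uniform-in-$n$ tail bounds $|\phi_{n}^{(\beta)}(x)|\leq C_{\beta}e^{-x}$ — which your argument genuinely needs — are not available.

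The paper sidesteps this entirely: density of the span $\mathfrak{B}$ of $\{e^{-\tau x}\}_{\tau>0}$ in $\mathcal{S}[0,\infty)$ is obtained by duality, i.e.\ from the Hahn--Banach theorem combined with injectivity of the Laplace transform on $\mathcal{S}'[0,\infty)$; no explicit approximating sequence is constructed. Your constructive route can likely be repaired — first approximate $\phi$ in $\mathcal{S}[0,\infty)$ by cutoffs $\chi_{R}\phi$ supported in $[0,2R]$ (for which the corresponding $\psi$ vanishes near $t=0$ and does extend to $C^{\infty}[0,1]$), then approximate each $\chi_{R}\phi$ by exponential polynomials, and diagonalize — but as written the step is incorrect, and the duality argument is the clean way to finish.
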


\begin{proof}
Let $g_{\lambda}(x)=\lambda g(\lambda x)-b\log\lambda\delta(x)$.  Let $\mathfrak{B}$ be the linear span of $\left\{e^{-\tau x}\right\}_{\tau\in\mathbb{R}}$.  Observe that $\mathfrak{B}$ is dense in $\mathcal{S}[0,\infty)$, due to the Hahn-Banach theorem and the fact that the Laplace transform is injective.  Next, we verify that 
\begin{equation*}
\lim_{\lambda\rightarrow\infty}\left\langle g_{\lambda}(x),\phi(x)\right\rangle=\left\langle (a+b\gamma)\delta(x)+b\operatorname*{Pf}\left(\frac{H(x)}{x}\right),\phi(x)\right\rangle \ ,  \ \ \ \phi\in\mathfrak{B}\ .
\end{equation*}
Indeed, it is enough for $\phi(x)=e^{-\tau x}$; by (\ref{l2eq1}), as $\lambda\rightarrow\infty$, 
\begin{align*}
\left\langle g_{\lambda}(x),e^{-\tau x}\right\rangle & =\mathcal{L}\left\{g,\frac{\tau}{\lambda}\right\}-b\log\lambda
=a+b\log\left(\frac{\lambda}{\tau}\right)-b\log\lambda+o(1)\\
&
=\left\langle (a+b\gamma)\delta(x)+b\operatorname*{Pf}\left(\frac{H(x)}{x}\right),e^{-\tau x}\right\rangle+o(1).
\end{align*}
Now, the Tauberian hypothesis (\ref{l2eq2}) implies that $\left\{g_{\lambda}\right\}_{\lambda\in[1,\infty)}$ is weakly bounded in $\mathcal{S}'[0,\infty)$, and so, by the Banach-Steinhaus theorem, it is equicontinuous.  Since an equicontinuous family of linear functionals converging over a dense subset must be convergent, we obtain that 
$$
\lim_{\lambda\rightarrow\infty}g_{\lambda}(x)=(a+b\gamma)\delta(x)+b\operatorname*{Pf}(H(x)/x)\ \ \  \mbox{in } \mathcal{S'}(\mathbb{R}),
$$
which is precisely (\ref{l2eq3}).
\end{proof}

\subsection{Classical Littlewood's One-sided Theorem}\label{lct}
Let us show how our two-sided Tauberian theorem can be used to give a simple proof of Hardy-Littlewood theorem in the form of Theorem \ref{lth1}.  We actually give a more general result for Stieltjes integrals.

\begin{remark} In many proofs of Littlewood's one-sided theorem, such as the one based in Wiener's method, one needs to establish first the boundedness of $s(x)=\sum_{n<x}c_{n}$, which is not an easy task \cite{hardy,korevaarbook,wiener}. The method that we develop in the proof of Theorem \ref{lc2} rather estimates the second order Riesz means, which turns out to be much simpler.
\end{remark}

\begin{theorem}
\label{lc2}
Let $s$ be of local bounded variation and supported in $[0,\infty)$. Suppose that 
(\ref{2talli}) holds. Furthermore, assume that there exist $A,B>0$ such that $s(x)+A\log x$ is non-decreasing on $[B,\infty)$.  Then
\begin{equation}
\label{l2eq4}
\mathcal{L}\left\{\mathrm{d}s;y\right\}=a+b\log\left(\frac{1}{y}\right)+o(1)\ , \ \ \ y\rightarrow 0^+,
\end{equation}
if and only if 
\begin{equation}
\label{l2eq5}
s(x)=s(0)+a+b\gamma+b\log x+o(1)\ , \ \ \ x\rightarrow\infty \ .
\end{equation}
\end{theorem}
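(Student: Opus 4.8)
The plan is to prove the two implications separately, after the harmless reduction to $s(0)=0$ (replace $s$ by $s-s(0)H$: this leaves $\mathcal{L}\{\mathrm{d}s;y\}$, and hence the constant $a$, unchanged, and the term $s(0)$ in (\ref{l2eq5}) then disappears). The implication (\ref{l2eq5})$\Rightarrow$(\ref{l2eq4}) is the Abelian half and is immediate: with $s(0)=0$, (\ref{l2eq5}) says $\lim_{x\to\infty}(s(x)-b\log x)=a+b\gamma$, so Corollary~\ref{lc1} gives $\mathcal{L}\{\mathrm{d}s;y\}=(a+b\gamma)-b\gamma+b\log(1/y)+o(1)=a+b\log(1/y)+o(1)$, which is (\ref{l2eq4}).

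For the converse (the Tauberian half) the heart of the matter is the a priori estimate
\[
s(x)=b\log x+O(1)\ ,\qquad x\to\infty\ .
\]
I would derive it from (\ref{l2eq4}) together with the one-sided hypothesis that $s(x)+A\log x$ is non-decreasing on $[B,\infty)$ --- equivalently, $\mathrm{d}s\geq-A\,\mathrm{d}x/x$ there, so that the primitives $\int_0^x s(t)\,\mathrm{d}t$ and $\int_0^x\!\int_0^t s(\tau)\,\mathrm{d}\tau\,\mathrm{d}t$ of $\mathrm{d}s$ are convex up to an elementary correction. The idea is: (i) evaluate $\mathcal{L}\{\mathrm{d}s;y\}$ at $y\sim 1/x$ and, using the almost-convexity to compare $\mathrm{d}s$ with $b\,\mathrm{d}t/t$, control the second-order Riesz means of the series --- concretely, show $\int_0^x s(t)\,\mathrm{d}t=bx\log x+O(x)$; (ii) feed this averaged estimate back into the almost-monotonicity of $s$, using that a non-decreasing function whose Ces\`{a}ro average $x^{-1}\int_0^x(\cdot)\,\mathrm{d}t$ is $c\log x+O(1)$ must itself be $c\log x+O(1)$ (here with $c=b+A$, applied to $s+A\log x$). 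This de-smoothing step is a short difference-quotient argument.

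Granting the displayed estimate, the remainder is routine. Put $g=s'$. Since $s$ is then logarithmically bounded, $g\in\mathcal{S}'(\mathbb{R})$, $g$ is supported in $[0,\infty)$, and $\mathcal{L}\{g;y\}=\mathcal{L}\{\mathrm{d}s;y\}$ satisfies (\ref{l2eq1}) by (\ref{l2eq4}). Writing the pairing as $\langle g(\lambda x),\phi(x)\rangle=-\lambda^{-1}\int_0^\infty s(\lambda v)\,\phi'(v)\,\mathrm{d}v$ (distributional integration by parts) and subtracting the analogous expression with $s(\lambda v)$ replaced by $b\log\lambda$, one checks that the estimate $s(x)=b\log x+O(1)$ is exactly the Tauberian hypothesis (\ref{l2eq2}). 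Theorem~\ref{lth3} then gives the distributional asymptotics (\ref{l2eq3}); that is, $s'$ satisfies (\ref{lfeq1}) with its constant $a$ equal to $a+b\gamma$. Finally, $s\in L^1_{\mathrm{loc}}(\mathbb{R})$ is supported in $[0,\infty)$ and $s+A\log x$ is non-decreasing on $[B,\infty)$, so Proposition~\ref{lp2} applies and yields $\lim_{x\to\infty}(s(x)-b\log x)=a+b\gamma$, which (recalling $s(0)=0$) is (\ref{l2eq5}).

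The main obstacle is the a priori estimate $s(x)=b\log x+O(1)$ --- everything else is bookkeeping around Corollary~\ref{lc1}, Theorem~\ref{lth3} and Proposition~\ref{lp2}. The delicate point there is producing the \emph{correct} remainder $O(x)$ in the Riesz-mean bound, rather than the merely $o(x\log x)$ that a bare application of Karamata's theorem to the auxiliary non-negative function $s+A\log x+\mathrm{const}$ would give: one has to exploit the almost-convexity of the primitives quantitatively (via the Laplace transform near $y=1/x$) and only then de-smooth. This is precisely where the one-sided Tauberian condition is used in an essential way, and it is the simplification over the classical Wiener-type argument referred to in the Remark preceding the statement.
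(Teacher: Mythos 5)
Your overall scaffolding — assume $s(0)=0$, use Corollary~\ref{lc1} for the Abelian half, produce the Tauberian hypothesis~(\ref{l2eq2}) for $g=s'$, apply Theorem~\ref{lth3}, finish with Proposition~\ref{lp2} — is exactly the paper's. The divergence is in \emph{how} you propose to arrive at~(\ref{l2eq2}), and it is here that there is a real gap.

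You propose to first prove the pointwise a priori estimate $s(x)=b\log x+O(1)$, via (i) a first-order Riesz mean bound $\int_0^x s(t)\,\mathrm{d}t=bx\log x+O(x)$ and (ii) a de-smoothing step. Two problems. First, your description of step (i) is internally inconsistent: you announce that you will ``control the second-order Riesz means'' but then write down a \emph{first}-order bound (recall $\int_0^x s(t)\,\mathrm{d}t=\int_0^x(x-u)\,\mathrm{d}s(u)$). Second, and more seriously, the first-order two-sided bound you assert is not established by the device you describe. The Laplace-transform-at-$y\sim1/x$ argument (compare $1-t/x\le e^{-t/x}$, respectively $e^{-t/x}\ge e^{-1}$ on $[0,x]$) gives only the one-sided upper bound $s^{(-1)}(x)/x\le b\log x+O(1)$ and the integral bound $\int_0^x S(t)\,\mathrm{d}t=O(x^2)$ for the non-negative defect $S(x)=bx\log x-s^{(-1)}(x)+Cx$. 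Passing from $\int_0^x S=O(x^2)$ to $S(x)=O(x)$ is itself a de-smoothing problem — $S$ is not monotone, so the naive difference-quotient argument you use in step (ii) does not apply to it. It can in fact be done (one must ``concavify'' $S$ by subtracting $bt\log t$, use that $S''\le b/t$ because $s$ is non-decreasing, and run a chord–area argument against $\int_0^x S=O(x^2)$), but you do not give this argument; the phrase ``exploit the almost-convexity of the primitives quantitatively'' is not a proof of the delicate step you yourself flag.

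The paper avoids all of this. It never proves $s(x)=b\log x+O(1)$, nor even $s^{(-1)}(x)=bx\log x+O(x)$. Instead it records only the \emph{second}-primitive estimate $s^{(-2)}(x)=b\tfrac{x^2}{2}\log x+O(x^2)$ — whose lower half is exactly $\int_0^x S=O(x^2)$, with no further de-smoothing needed — and observes that this pointwise estimate on $s^{(-2)}$ already yields the distributional relation $s^{(-2)}(\lambda x)=b\tfrac{(\lambda x)^2}{2}H(x)\log\lambda+O(\lambda^2)$ in $\mathcal{S}'(\mathbb{R})$, which upon differentiating three times gives precisely~(\ref{l2eq2}). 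This is the whole point of the Remark preceding the theorem: the paper replaces the hard classical step (boundedness of $s-b\log x$) by a bound on the second-order Riesz means, which is ``much simpler.'' Your plan re-introduces the hard step. It can be made to work, but as written the critical de-smoothing of $S$ is missing, and the resulting proof would in any case be longer and more involved than the paper's.
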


\begin{proof}
One direction is implied by Corollary \ref{lc1}.  For the other part, we may assume that $s(0)=0$ and that $s$ is non-decreasing over the whole real line. Consider the second order primitive $s^{(-2)}(x)=\int^{x}_{0}(x-t)s(t)\mathrm{d}t$. Our strategy will be to show

\begin{equation}
\label{l2eq6}
s^{(-2)}(x)=b\frac{x^{2}}{2}\log x+O(x^{2}) \ .
\end{equation}
Suppose for the moment that we were able to show this claim. Let us deduce (\ref{l2eq5}) from (\ref{l2eq6}).  By (\ref{l2eq6}), we obtain the distributional relation
\begin{equation*}
s^{(-2)}(\lambda x)=b\frac{(\lambda x)^{2}}{2}H(x)\log\lambda +O(\lambda^{2})\ ,
\end{equation*}
in $\mathcal{S'}(\mathbb{R})$.  Differentiating three times, $s'(\lambda x)-b\lambda^{-1}\log\lambda\delta(x)=O\left(1/ \lambda\right)$ in $\mathcal{S'}(\mathbb{R})$.
Applying Theorem \ref{lth3} to $g=s'$, we obtain that $s'$ has the asymptotic behavior (\ref{l2eq3}).  Thus, Proposition \ref{lp2} yields (\ref{l2eq6}). 

It then remains to show (\ref{l2eq6}). We start by looking at $s^{-1}(x)=\int_{0}^{x}s(t)\mathrm{d}t$. Since $1-t\leq e^{-t}$, we have the easy upper estimate
\begin{equation}
\label{l2eq7}
\frac{s^{(-1)}(x)}{x}=\int^{x}_{0}\left(1-\frac{t}{x}\right)\mathrm{d}s(t)\leq\int^{\infty}_{0}e^{-\frac{t}{x}}\mathrm{d}s(t)=b\log x+O_{\textnormal{R}}(1) \ .
\end{equation} 
Notice that (\ref{l2eq7}) yields the upper estimate in (\ref{l2eq6}). Next, define $S(x)=bx\log x-s^{(-1)}(x)+Cx $, where the constant $C>0$ is chosen so large that $S(x)>0$ for all $x>0$. Observe now that the lower estimate in (\ref{l2eq6}) would immediately follow if we show 
$$
\int_{0}^{x}S(t)\mathrm{d}t=O(x^{2})\ .
$$  
Finally, because of (\ref{l2eq4}), we have that
$$
\lim_{y\to0^{+}}y^{2}\int_{0}^{\infty}S(t)e^{-yt}\mathrm{d}t=b-\gamma b-a +C\ ,$$
and hence  
\begin{equation*}
\int_{0}^{x}S(t)\mathrm{d}t\leq e\int_{0}^{x}S(t)e^{-\frac{t}{x}}\mathrm{d}t= O(x^{2})\ .
\end{equation*}
The claim has been established and this completes the proof.
\end{proof}

\section{Littlewood One-sided Tauberian Theorems}
\label{l1}

We want one-sided generalizations of Theorem \ref{lc2} in which the conclusion is Ces\`{a}ro limits. The generalization is in terms of Ces\`{a}ro one-sided boundedness as explained in the next subsection. We shall show below first a Tauberian theorem for Laplace transforms of distributions. In Subsection \ref{l1si} we study Stieltjes integrals and generalize a result of Sz\'{a}sz \cite{szasz3}. Finally, we give applications to numerical series in Subsection \ref{l1ns}; in particular, we prove Theorem \ref{ilth1} . 

\subsection{Distributional Littlewood One-sided Tauberian Theorem}
\label{l1d}
For the distributional generalization, let us rewrite the Tauberian hypothesis of Theorem \ref{lc2} is a more suitable way for our purposes.  Recall a distribution $g$ is said to be \emph{non-negative} on an interval $(B_{1},B_{2})$ if it consides with a non-negative measure on that interval.  In such case we may write $g(x)\geq 0$ on $(B_{1},B_{2})$.   With this notation the Tauberian hypothesis of Theorem \ref{lc2} becomes $s'(x)+A/x\geq 0$ on $(B,\infty)$, for some $A,B>0$ or, multiplying by $x$,
$xs'(x)=O_{\textnormal{L}}(1)$ on $(B,\infty)$.
We can also generalize these ideas by using the symbol $O_{\textnormal{L}}(1)$ in the Ces\`{a}ro sense.

\begin{definition}
\label{ld1}
Let $g\in\mathcal{D'}(\mathbb{R})$.  Given $m\in\mathbb{N}$, we say that 
\begin{equation*}
g(x)=O_{\textnormal{L}}(1) \ \ \ \left(\mathrm{C}, m\right) \ , \ \ \ x\rightarrow\infty \ ,
\end{equation*}
if there exist $A,B>0$ and a non-negative measure $\mu$ such that $g(x)+A=\mu^{(m)}$ on $(B,\infty)$.
\end{definition}

Definition \ref{ld1} makes possible to give sense to the relation $xf'(x)=O_{\textnormal{L}}(1)$ in the Ces\`{a}ro sense.

We need also to introduce some notation in order to move further.  For each $m\in\mathbb{N}$, let $l_m$ be the $m$-primitive of $H(x)\log x$ with support in $[0,\infty)$.
It can be verified by induction that for $m\geq 1$
\begin{equation}
\label{l1eq2}
l_m(x)=\frac{1}{(m-1)!}\int^{x}_{0}\log(t)(x-t)^{m-1}\mathrm{d}t=\frac{x^{m}_{+}}{m!}\log x-\frac{x^{m}_{+}}{m!}\sum^{m}_{k=1}\frac{1}{k} \ .
\end{equation}

Let $f$ be supported on $[0,\infty)$. We now study the asymptotic behavior $f(x)=a+b\log x+o(1) \ \ \ (\mathrm{C},m)$, which in view of (\ref{l1eq2}) means that \begin{equation}
\label{l1eq3}
f^{(-m)}(x)=b\frac{x^m_{+}}{m!}\log x+\frac{x^{m}_{+}}{m!}\left(a-b\sum^{m}_{k=1}\frac{1}{k}\right)+o(x^m) \ ,
\end{equation}
$x\rightarrow\infty$, in the ordinary sense. The ensuing Tauberian theorem is a natural distributional version of Littlewood's Tauberian  theorem, in the context of Ces\`{a}ro limits.

\begin{theorem}
\label{lth4}
Let $f\in\mathcal{D'}(\mathbb{R})$ be such that $\operatorname*{supp} f\subseteq[0,\infty)$ and let $m\in\mathbb{N}$. Assume that
\begin{equation}
\label{l1eq5}
xf'(x)=O_{\text{L}}(1) \ \ \ (\mathrm{C},m) \ , \ \ \ x\rightarrow\infty \ .
\end{equation}
Suppose that $f$ is Laplace transformable on $\Re e\:z=y>0$.  Then, 
\begin{equation}
\label{l1eq6}
\mathcal{L}\left\{f',y\right\}=a+b\log\left(\frac{1}{y}\right)+o(1)  \ , \ \ \ y\rightarrow0^+ \ . 
\end{equation}
if and only if
\begin{equation}
\label{l1eq7}
\lim_{x\rightarrow\infty}f(x)-b\log x=a+b\gamma \ \ \ (\mathrm{C},m) \ , \ \ \ x\rightarrow\infty \ .
\end{equation}
\end{theorem}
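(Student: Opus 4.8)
The plan is to mimic, at the level of Ces\`{a}ro means, the strategy used in the proof of Theorem \ref{lc2}, but now working with the $(m+2)$-th order primitive rather than the second order one. As before, the easy direction (from (\ref{l1eq7}) to (\ref{l1eq6})) should follow from the Abelian result: the Ces\`{a}ro asymptotics (\ref{l1eq7}) translate, via (\ref{l1eq3}) and \cite{estrada-kanwal2}, into the distributional asymptotic behavior (\ref{laeq1}) for $g=f'$, so Theorem \ref{lth2} gives (\ref{l1eq6}). The substance of the theorem is the Tauberian direction.

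For that direction, first I would reduce the Tauberian hypothesis (\ref{l1eq5}) to a concrete one-sided estimate on a primitive. By Definition \ref{ld1}, $xf'(x)+A=\mu^{(m)}$ on $(B,\infty)$ for some non-negative measure $\mu$; integrating $m$ times one gets that $f(x)+A\log x$ plus a polynomial of degree $m$ has its $m$-th primitive non-decreasing (up to polynomial corrections), i.e.\ the $(m+1)$-th primitive of $f'$ is, modulo polynomials, convex of one sign in a controlled way. Concretely, after reducing to $f$ supported on all of $\mathbb{R}$ with enough vanishing at $0$, I expect to be able to arrange that $F(x):=bx_{+}^{m+1}\log x /(m+1)! - f^{(-(m+1))}(x) + Cx^{m+1}$ is the $(m+1)$-th primitive (with support in $[0,\infty)$) of a non-negative measure, for a suitably large $C$; this is the analog of the function $S$ in the proof of Theorem \ref{lc2}. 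The point of raising the order of primitivation to $m+1$ is precisely to absorb the $(\mathrm{C},m)$ in the Tauberian hypothesis into an honest monotonicity/positivity statement.

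Next I would run the two-sided-integral sandwich exactly as in (\ref{l2eq7}): using $0\le 1-t/x\le e^{-t/x}$ against the non-negative measure $\mathrm{d}F$, together with the Laplace asymptotics (\ref{l1eq6}) (which give $y^{m+1}\int_0^\infty e^{-yt}\,\mathrm{d}F(t) \to$ a constant as $y\to 0^+$, by the same computation as in Theorem \ref{lc2}), to conclude $\int_0^x (x-t)^{m}\,\mathrm{d}F(t) = O(x^{m+1})$, and hence the two-sided estimate
\begin{equation*}
f^{(-(m+2))}(x) = b\,\frac{x^{m+2}}{(m+2)!}\log x + O(x^{m+2})\ , \qquad x\to\infty\ .
\end{equation*}
This is the Ces\`{a}ro analog of (\ref{l2eq6}). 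From here the argument closes just as before: rescaling gives $f^{(-(m+2))}(\lambda x) = b (\lambda x)^{m+2} H(x)\log\lambda /(m+2)! + O(\lambda^{m+2})$ in $\mathcal{S}'(\mathbb{R})$, differentiating $m+3$ times yields $f'(\lambda x) - b\lambda^{-1}\log\lambda\,\delta(x) = O(1/\lambda)$ in $\mathcal{S}'(\mathbb{R})$, so Theorem \ref{lth3} applies to $g=f'$ and produces the full distributional asymptotics (\ref{l2eq3}) for $f'$. Then one integrates $m$ times and invokes a Ces\`{a}ro version of Proposition \ref{lp2}: the one-sided boundedness (\ref{l1eq5}) is exactly what is needed to upgrade the distributional asymptotics of $f'$ to the ordinary asymptotics (\ref{l1eq3}) of $f^{(-m)}$, i.e.\ to (\ref{l1eq7}).

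The main obstacle I anticipate is the bookkeeping in that last step — proving the $(\mathrm{C},m)$ analog of Proposition \ref{lp2}. In Proposition \ref{lp2} the monotonicity of $s$ was used directly, via evaluation against two bump functions $\phi_1,\phi_2$, to sandwich $s(\lambda)$; here one must instead show that the distributional asymptotics (\ref{l2eq3}) for $f'$, together with the fact that $xf'(x)+A$ is an $m$-th derivative of a non-negative measure near infinity, forces $f^{(-m)}(x)$ to have the clean ordinary asymptotic expansion (\ref{l1eq3}). This requires the same kind of bump-function comparison applied to $f^{(-m)}$ (whose derivative $f^{(-(m-1))} = f^{(-m)}{}'$ need not be monotone, but whose relevant primitive is controlled by the measure $\mu$), and one has to be careful that the polynomial error terms introduced by repeated primitivation and by the reduction at $0$ do not contaminate the $x^m\log x$ and $x^m$ coefficients. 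A secondary technical point is justifying the passage from (\ref{l1eq5}) to the clean positivity statement for $F$, since $xf'(x)=O_{\mathrm L}(1)\ (\mathrm C,m)$ is only an asymptotic (near-infinity) hypothesis, so all positivity claims hold only on $(B,\infty)$ and must be patched up with compactly supported corrections that contribute only to lower-order/polynomial terms.
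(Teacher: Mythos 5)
Your Abelian direction and the general ambition (leverage the machinery of Theorem~\ref{lc2} at a higher order of primitivation) are reasonable, but your Tauberian direction has two substantive gaps, and the paper's actual proof follows a cleaner route that sidesteps both of them.

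First, the positivity claim for $F(x) := b\,x_{+}^{m+1}\log x/(m+1)! - f^{(-(m+1))}(x) + Cx^{m+1}$ is not established and is not an evident consequence of the Tauberian hypothesis. Taking $m+1$ derivatives, the claim that $F$ is the $(m+1)$-th primitive of a non-negative measure amounts to $f(x)\leq b\log x+O(1)$ on $(B,\infty)$ in the sense of measures; but the hypothesis $xf'(x)=O_{\mathrm L}(1)\ (\mathrm C,m)$ controls $(xf')^{(-m)}$, not $f$ itself, and there is no obvious passage from a lower bound on $(xf')^{(-m)}$ to an upper bound on $f$ or on $f^{(-(m+1))}$. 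In the model case of Theorem~\ref{lc2}, the analogous upper bound on $s^{(-1)}$ was \emph{derived} from the monotonicity of $s$, via the comparison $1-t/x\leq e^{-t/x}$ against the non-negative measure $\mathrm{d}s$; here $f^{(-m)}$ is not given to be monotone, so that step does not transfer. Second, the ``Ces\`aro version of Proposition~\ref{lp2}'' you invoke at the end is a genuine lemma that would need proof: Proposition~\ref{lp2} uses the monotonicity of $s+A\log x$ in an essential way, and you have not identified the corresponding monotone object at the $(\mathrm C,m)$ level.

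The paper resolves both issues at once with a change of variable that you are missing: set $\tau(x) = x^{-m}f^{(-m)}(x)$. The algebraic identity $(xf')^{(-m)} = xf^{(-m+1)} - mf^{(-m)}$ gives $\tau'(x) = x^{-m-1}(xf')^{(-m)}$, so, after the usual reductions (adding $AH(x)\log x$, removing a compactly supported piece), the Tauberian hypothesis makes $\tau'$ a non-negative measure, i.e.\ $\tau$ is a non-decreasing \emph{function}. A short Laplace-transform computation, using $\bigl(\mathcal{L}\{\tau';y\}/y\bigr)^{(m)} = (-1)^m \mathcal{L}\{f';y\}/y^{m+1}$ and then integrating $m$ times, converts (\ref{l1eq6}) into the hypothesis (\ref{l2eq4}) of Theorem~\ref{lc2} for $\tau$ (with shifted constants). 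Theorem~\ref{lc2} applied directly to $\tau$ then yields the ordinary asymptotics (\ref{l1eq8}), which unwinds to (\ref{l1eq7}). No Ces\`aro analog of Proposition~\ref{lp2} is needed, and no a priori upper bound on $f^{(-(m+1))}$ has to be manufactured. The identity $(xf')^{(-m)} = xf^{(-m+1)} - mf^{(-m)}$, together with the observation that dividing by $x^{m+1}$ turns it into the derivative of $x^{-m}f^{(-m)}$, is the key device your proposal does not have.
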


\begin{proof}
We shall show that $f^{(-m)}$ is locally integrable for large arguments and 
\begin{equation*}
f^{(-m)}(x)=(a+b\gamma)\frac{x^m}{m!}+l_m(x)+o(x^m) \ , \ \ \ x\rightarrow\infty \ .
\end{equation*}
Setting $\tau(x)=x^{-m}f^{(-m)}$, the above asymptotic formula is the same as 
\begin{equation}
\label{l1eq8}
\tau(x)=\frac{a}{m!}+\frac{b\gamma}{m!}-\frac{b}{m!}\sum^{m}_{k=1}\frac{1}{k}+\frac{b}{m!}\log x+o(1) \ , \ \ \ x\rightarrow\infty \ .
\end{equation}
By adding a term of the form $AH(x)\log x$ to $f$ and removing a compactly supported distribution, we may assume that $(xf')^{(-m)}$ is a non-negative measure.   It is clear that we can also assume that $f$, and hence $f^{(-m)}$, is zero in a neighborhood of the origin.  Next, it is easy to verify that $(xf')^{(-m)}=xf^{(-m+1)}-mf^{(-m)}$; multiplying by $x^{-m-1}$, we obtain that $\tau'=x^{-m}f^{(-m+1)}-mx^{-m-1}f^{(-m)}$ is a non-negative measure.  We now look at the Laplace transform of $\tau'$.  Set
\begin{equation*}
F(y)=\mathcal{L}\left\{f';y\right\} \ \ \ \text{and} \ \ \ T(y)=\mathcal{L}\left\{\tau';y\right\} \ .
\end{equation*}
We then have,
\begin{align*}
\left(\frac{T(y)}{y}\right)^{(m)}&=\frac{d^m}{dy^m}\left(\int^{\infty}_{0}\frac{f^{(-m)}(x)}{x^m}e^{-yx}\mathrm{d}x\right)
=(-1)^m\int^{\infty}_{0}f^{(-m)}(x)e^{-yx}\mathrm{d}x\\
&
=(-1)^m\frac{F(y)}{y^{m+1}}
=(-1)^m\frac{a}{y^{m+1}}+(-1)^m\frac{b\log\left(\frac{1}{y}\right)}{y^{m+1}}+o\left(\frac{1}{y^{m+1}}\right) \ ,
\end{align*}
as $y\rightarrow0^+$. Integrating $m$-times the above asymptotic formula and multiplying by $y$ we get 
\begin{equation*}
T(y)=\frac{a}{m!}-\frac{b}{m!}\sum^{m}_{k=1}\frac{1}{k}+\frac{b}{m!}\log\left(\frac{1}{y}\right)+o(1)\ , \ \ \ y\rightarrow0^+ \ .
\end{equation*}
Thus, $\tau$ satisfies the hypothesis of Theorem \ref{lc2}, and (\ref{l1eq8}) follows at once.
\end{proof}

We also have, 

\begin{corollary}
\label{lp3}
Let $f\in\mathcal{D'}(\mathbb{R})$ be supported in $[0,\infty)$.   Suppose that $f'$ has the distributional asymptotic behavior 
\begin{equation*}
f'(\lambda x)=a\frac{\delta(x)}{\lambda}+b\frac{\log\lambda}{\lambda}\delta(x)+\frac{b}{\lambda}\operatorname*{Pf}\left(\frac{H(x)}{x}\right)+o\left(\frac{1}{\lambda}\right) \ .
\end{equation*}
If (\ref{l1eq5}) holds, then $f(x)=a+b\log x+o(1)\ \ \ (\mathrm{C},m),$ $x\rightarrow\infty  .$

\end{corollary}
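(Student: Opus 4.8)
The plan is to obtain the result as a two-step assembly of theorems already at hand: the Abelian theorem (Theorem \ref{lth2}) and the one-sided Tauberian theorem (Theorem \ref{lth4}). First, set $g=f'$. By assumption $g\in\mathcal{S}'(\mathbb{R})$ is supported in $[0,\infty)$ and exhibits the distributional asymptotic behavior (\ref{laeq1}), so Theorem \ref{lth2} immediately yields
$$
\mathcal{L}\{f';y\}=(a-b\gamma)+b\log\left(\frac{1}{y}\right)+o(1)\ ,\qquad y\rightarrow 0^{+}\ .
$$

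Second, I would record that $f$ itself, and not merely $f'$, is Laplace transformable on $\Re e\:z=y>0$. Indeed, $f'\in\mathcal{S}'(\mathbb{R})$ and $\operatorname*{supp}f\subseteq[0,\infty)$, so $f$ is a primitive, supported on the half-line, of a tempered distribution supported on the half-line; such a primitive is again tempered, hence $f\in\mathcal{S}'(\mathbb{R})$ and $e^{-yx}f\in\mathcal{S}'(\mathbb{R})$ for every $y>0$. Now all the hypotheses of Theorem \ref{lth4} are in force: the Tauberian condition (\ref{l1eq5}) is assumed, and the displayed relation above is exactly (\ref{l1eq6}) for that theorem provided its constant $a$ is read as $a-b\gamma$ here. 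The implication (\ref{l1eq6})$\Rightarrow$(\ref{l1eq7}) then produces
$$
\lim_{x\rightarrow\infty}\bigl(f(x)-b\log x\bigr)=(a-b\gamma)+b\gamma=a\qquad(\mathrm{C},m)\ ,
$$
which is precisely the conclusion $f(x)=a+b\log x+o(1)$ $(\mathrm{C},m)$ as $x\rightarrow\infty$.

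I do not expect any genuine obstacle, since the argument only glues together the Abelian and Tauberian theorems of the preceding sections. The single point deserving a moment of attention is the bookkeeping of the additive constant: the term $-b\gamma$ generated by the Abelian step cancels exactly the $+b\gamma$ occurring on the Ces\`{a}ro side of Theorem \ref{lth4}, and one must likewise note the elementary fact that Laplace transformability of $f$ is inherited from that of $f'$. Everything else is a direct citation.
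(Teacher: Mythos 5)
Your proof is correct and follows exactly the route the paper intends: the paper's own proof of this corollary reads simply ``It follows immediately from Theorem \ref{lth2} and Theorem \ref{lth4}.'' You have merely filled in the details — the transfer of the constant ($a\mapsto a-b\gamma$ after the Abelian step, cancelling the $+b\gamma$ in (\ref{l1eq7})) and the observation that $f$ inherits Laplace transformability from $f'$ — both of which are accurate and make the citation fully explicit.
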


\begin{proof}
It follows immediately from Theorem \ref{lth2} and Theorem \ref{lth4}.
\end{proof}

\subsection{Stieltjes Integrals}
\label{l1si}

When the distribution $s=f$ is a function of local bounded variation, then (\ref{l1eq5}) can be written as
\begin{equation}
\label{l1eq14}
\int^{x}_{0}\frac{t}{x}\left(1-\frac{t}{x}\right)^{m-1}\mathrm{d}s(t)=O_{\textnormal{L}}(1) \ ,
\end{equation}
for $m\geq 1$. So we obtain at once the ensuing corollary of Theorem \ref{lth4}, it generalizes a classical result of Sz\'{a}sz \cite[Thm.1]{szasz3}.

\begin{corollary}
\label{lc3}
Let $s$ be a function of bounded variation on each finite interval such that $s(x)=0$ for $x<0$.  Furthermore, assume that 
\begin{equation}
\label{l1eq12}
\mathcal{L}\left\{\mathrm{d}s;y\right\}=\int^{\infty}_{0}e^{-yx}\mathrm{d}s(x) \ \ \ (\mathrm{C})
\end{equation}
is summable for each $y>0$. If 
\begin{equation}
\label{l1eq13}
\mathcal{L} \left\{\mathrm{d}s;y\right\}=a+b\log\left(\frac{1}{y}\right)+o(1) \ , \ \ \ y\rightarrow 0^+ \ ,
\end{equation}
then, the Tauberian condition
(\ref{l1eq14}), with $m\geq1$, implies that 
\begin{equation}
\label{l1eq15}
\lim_{x\rightarrow\infty}s(x)-b\log x=s(0)+a+b\gamma \ \ \ (\mathrm{C},m) \ ,
\end{equation}
i.e.,
\begin{equation}
\label{l1eq16}
\lim_{x\rightarrow\infty}\int^{x}_{0}\left(1-\frac{t}{x}\right)^m \mathrm{d}s(t)-b\log x=s(0)+a+b\left(\gamma-\sum^{m}_{k=1}\frac{1}{k}\right) \ .
\end{equation}
\end{corollary}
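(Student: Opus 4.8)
The plan is to obtain Corollary~\ref{lc3} directly from Theorem~\ref{lth4}, applied to the distribution $f=s$, after which the conclusion is just the translation of the resulting Ces\`{a}ro limit into the language of Stieltjes integrals.

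First I would check that, for $f=s$ and the given $m\geq1$, the three standing hypotheses of Theorem~\ref{lth4} are precisely the hypotheses of the corollary. The support condition $\operatorname*{supp}f\subseteq[0,\infty)$ is immediate. By the discussion around (\ref{2talli}), the assumption (\ref{l1eq12}) that $\int_{0}^{\infty}e^{-yx}\mathrm{d}s(x)$ is $(\mathrm{C})$ summable for each $y>0$ says exactly that $s$ is Laplace transformable on $\Re e\:z>0$ in the distributional sense, and under that identification $\mathcal{L}\{\mathrm{d}s;y\}=\mathcal{L}\{s';y\}=\mathcal{L}\{f';y\}$, so that (\ref{l1eq13}) is nothing but (\ref{l1eq6}). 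Finally, the Ces\`{a}ro one-sided boundedness $xf'(x)=O_{\textnormal{L}}(1)$ $(\mathrm{C},m)$ of Definition~\ref{ld1} is, as noted just before the statement, the same as the bound (\ref{l1eq14}): the $m$-primitive of $xf'=x\,\mathrm{d}s$ supported in $[0,\infty)$ is the ordinary function $\frac{1}{(m-1)!}\int_{0}^{x}t(x-t)^{m-1}\mathrm{d}s(t)$, whose $m$-th Ces\`{a}ro mean is $m\int_{0}^{x}\frac{t}{x}\bigl(1-\frac{t}{x}\bigr)^{m-1}\mathrm{d}s(t)$, and the harmless positive factor $m$ is irrelevant for the $O_{\textnormal{L}}$ symbol.

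Once the hypotheses are matched, Theorem~\ref{lth4} immediately gives the equivalence of (\ref{l1eq13}) with the Ces\`{a}ro limit $\lim_{x\to\infty}s(x)-b\log x=a+b\gamma$ $(\mathrm{C},m)$, and it only remains to rewrite this limit for Stieltjes integrals. Since the $m$-primitive of the function $s$ with support in $[0,\infty)$ is $\frac{1}{(m-1)!}\int_{0}^{x}(x-t)^{m-1}s(t)\,\mathrm{d}t$, the Ces\`{a}ro limit of $s$ becomes, as in Subsection~\ref{lcl}, a limit of $\int_{0}^{x}(1-t/x)^{m}\mathrm{d}s(t)$ carrying an additive term built from $s(0)$; and since the $m$-primitive of $H(x)\log x$ is $l_{m}$, whose explicit form is given in (\ref{l1eq2}), the subtracted term $b\log x$ contributes the constant $-b\sum_{k=1}^{m}1/k$. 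Collecting these two adjustments turns the Ces\`{a}ro limit above into (\ref{l1eq15}) and its spelled-out form into (\ref{l1eq16}). I do not anticipate any real obstacle: the argument is a short deduction from Theorem~\ref{lth4} plus a change of notation, and the only point requiring care is exactly this bookkeeping of the two additive constants --- the $s(0)$ coming from the jump of $s$ at the origin when passing to $\int_{0}^{x}(1-t/x)^{m}\mathrm{d}s(t)$, and the $b\sum_{k=1}^{m}1/k$ coming from the passage between $H(x)\log x$ and $l_{m}$ in (\ref{l1eq2}).
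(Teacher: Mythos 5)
Your proposal is correct and follows exactly the route the paper itself takes: the paper presents Corollary~\ref{lc3} as an immediate specialization of Theorem~\ref{lth4} to $f=s$, after noting that the Tauberian hypothesis (\ref{l1eq5}) becomes (\ref{l1eq14}) when $s$ has locally bounded variation (the $m$-primitive/Ces\`{a}ro-mean computation with the factor $m$, and the Laplace-transformability identification via (\ref{2talli}), are precisely the points you verify). The remaining translation into (\ref{l1eq15})--(\ref{l1eq16}), with the $s(0)$ and $-b\sum_{k=1}^m 1/k$ adjustments you describe, is the same bookkeeping the paper leaves implicit.
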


\begin{remark}
\label{lr1}
Observe that (\ref{l1eq12}) has a general character. We emphasize that it means that for each $y$ there exists $k_{y}\in\mathbb{N}$ such that 
\begin{equation*}
\mathcal{L}\left\{\mathrm{d}s;y\right\}=\int^{\infty}_{0}e^{-yx}\mathrm{d}s(x) \ \ \ (\mathrm{C},k_{y})\ ,
\end{equation*}
and the $k_{y}$ is allowed to become arbitrarily large as $y$ decreases to 0.
\end{remark}
\begin{remark}
\label{lr2}
Integration by parts in (\ref{l1eq14}) shows that it is equivalent to
\begin{equation*}
\int^{x}_{0}\left(1-\frac{t}{x}\right)^{m-1}\mathrm{d}s(t)=\int^{x}_{0}\left(1-\frac{t}{x}\right)^m \mathrm{d}s(t)+O_{\textnormal{L}}(1) \ .
\end{equation*}
\end{remark}

We can specialize Corollary \ref{lc3} to numerical series and obtain the following result about the Riesz means \cite{chandra-mina} of the series. The meaning of $(\mathrm{R},\left\{\lambda_n\right\})$ in the following theorem is $(\mathrm{R},\left\{\lambda_n\right\},k)$ for some $k$. In particular, $k$ may depend on $y$ in relation (\ref{l1eq18}) below.

\begin{corollary}
\label{lc4}
Let $\left\{\lambda_n\right\}^{\infty}_{n=0}$ be an increasing sequence of non-negative tending to infinity.  Assume that 
\begin{equation}
\label{l1eq18}
F(y)=\sum^{\infty}_{n=0}c_ne^{-y\lambda_n} \ \ \ (\mathrm{R},\left\{\lambda_n\right\}) \ 
\end{equation}
is summable for each $y>0$.  If 
\begin{equation}
\label{l1eq19}
F(y)=a+b\log\left(\frac{1}{y}\right)+o(1) \ , \ \ \ y\rightarrow0^+ \ ,
\end{equation}
then, the Tauberian condition 
\begin{equation}
\label{l1eq20}
\sum_{\lambda_n\leq x}c_n\lambda_n\left(1-\frac{\lambda_n}{x}\right)^{m-1}=O_{\textnormal{L}}(x)\ ,
\end{equation}
implies that 
\begin{equation}
\label{l1eq21}
\lim_{x\rightarrow\infty}\sum_{\lambda_n\leq x}c_n\left(1-\frac{\lambda_n}{x}\right)^m-b\log x=a+b\left(\gamma-\sum^{m}_{k=1}\frac{1}{k}\right) \ .
\end{equation}
\end{corollary}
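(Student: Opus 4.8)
The plan is to derive this corollary directly from Corollary~\ref{lc3} by realizing the numerical series as a Laplace-Stieltjes integral. First I would set
$$ s(x)=\sum_{\lambda_{n}<x}c_{n}\ , $$
which is a step function, hence of bounded variation on every bounded interval, and which vanishes for $x<0$ because all $\lambda_{n}\geq0$; thus $s$ satisfies the standing hypotheses of Corollary~\ref{lc3}. Its Stieltjes differential is the atomic measure $\mathrm{d}s=\sum_{n=0}^{\infty}c_{n}\delta_{\lambda_{n}}$, so that for every $x>0$ and every $k\geq0$
$$ \int_{0}^{x}\Big(1-\frac{t}{x}\Big)^{k}\mathrm{d}s(t)=\sum_{\lambda_{n}\leq x}c_{n}\Big(1-\frac{\lambda_{n}}{x}\Big)^{k}\ , $$
the right-hand side being precisely the Riesz mean of order $k$, relative to $\{\lambda_{n}\}$, of the series $\sum c_{n}$. (When $\lambda_{0}=0$ the atom of $\mathrm{d}s$ at the origin is accounted for by the convention that $\int_{0}^{x}$ includes the endpoint $0$; if the $\lambda_{n}$ are only non-strictly increasing one first coalesces equal exponents; and the distinction between $\lambda_{n}<x$ and $\lambda_{n}\leq x$ is immaterial for everything below.)

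With this choice of $s$, the hypotheses and conclusion of Corollary~\ref{lc4} become exactly those of Corollary~\ref{lc3}. Indeed, $\mathcal{L}\{\mathrm{d}s;y\}=\sum_{n=0}^{\infty}c_{n}e^{-y\lambda_{n}}=F(y)$, and applying the displayed identity to the measure $e^{-yt}\mathrm{d}s(t)$ shows that, for each fixed $y>0$, $(\mathrm{C},k)$ summability of $\int_{0}^{\infty}e^{-yt}\mathrm{d}s(t)$ is the same as $(\mathrm{R},\{\lambda_{n}\},k)$ summability of $F(y)$; hence (\ref{l1eq12})$\Leftrightarrow$(\ref{l1eq18}) and (\ref{l1eq13})$\Leftrightarrow$(\ref{l1eq19}). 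Multiplying the Tauberian hypothesis (\ref{l1eq14}) by $x$ turns it into $\sum_{\lambda_{n}\leq x}c_{n}\lambda_{n}(1-\lambda_{n}/x)^{m-1}=O_{\textnormal{L}}(x)$, which is (\ref{l1eq20}). Finally, since $s(0)=0$, the conclusion (\ref{l1eq16}) of Corollary~\ref{lc3} reads $\lim_{x\to\infty}\big(\sum_{\lambda_{n}\leq x}c_{n}(1-\lambda_{n}/x)^{m}-b\log x\big)=a+b\big(\gamma-\sum_{k=1}^{m}1/k\big)$, which is precisely (\ref{l1eq21}). An application of Corollary~\ref{lc3} then yields the result.

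There is no genuinely new analytic content in this argument; it is a translation. The only point I would take care to write out explicitly is the identification, for fixed $y$, of $(\mathrm{R},\{\lambda_{n}\},k)$ summability of the exponential series $\sum c_{n}e^{-y\lambda_{n}}$ with $(\mathrm{C},k)$ summability of the Laplace-Stieltjes integral of $\mathrm{d}s$, together with the correct handling of a possible atom at the origin when $\lambda_{0}=0$. Once that dictionary is in place, everything --- in particular the constants $\gamma$ and $\sum_{k=1}^{m}1/k$ --- follows from Corollary~\ref{lc3} verbatim.
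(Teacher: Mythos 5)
Your proof is correct and follows exactly the route the paper intends: the paper simply remarks that Corollary~\ref{lc4} is obtained by specializing Corollary~\ref{lc3} to the step function $s(x)=\sum_{\lambda_n<x}c_n$, and you have supplied precisely that dictionary, including the identification of $(\mathrm{R},\{\lambda_n\},k)$ summability of $F(y)$ with $(\mathrm{C},k)$ summability of the Laplace--Stieltjes integral and the minor care needed for a possible atom at the origin.
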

\subsection{Applications to Ces\`{a}ro Summability of Numerical Series}
\label{l1ns}
We end this article by showing that when $\lambda_n=n$ in Corollary \ref{lc4} then the Riesz means might be replaced everywhere by Ces\`{a}ro means. We begin by observing that (\ref{l1eq18}) gives nothing new for $\lambda_n=n$, that is, it simply reduces to convergence of the power series $\sum_{n=0}^{\infty}c_{n}r^{n}$ for $\left|r\right|<1$.

The Ces\`{a}ro means of order $m\geq1$ of a sequence $\left\{b_n\right\}^{\infty}_{n=0}$ are given by
\begin{equation}
\label{l1eq22}
C_m\left\{b_k;n\right\}:=\frac{m!}{n^m}\sum^{n}_{k=0}
\binom
{k+m-1}{
m-1}
b_{n-k} \ .
\end{equation} 
So, if $\lambda_n=n$, then (\ref{l1eq21}) is equivalent to
$$
\lim_{n\to\infty}C_m\{s_{k}- \log k\:;n\}=a+b\left(\gamma-\sum^{m}_{k=1}\frac{1}{k}\right) \ .
$$
with $s_{k}=\sum_{j=0}^{k}c_{j}$, as shown by the equivalence theorem for Riesz and Ces\`{a}ro summability \cite{hardy,ingham}.

Thus, we only need to show that (\ref{l1eq20}) is implied by Ces\`{a}ro one-sided boundedness in the sense already defined in the Introduction. Recall we write 
\begin{equation}
\label{l1eq23}
b_n=O_{\textnormal{L}}(1) \ \ \ (\mathrm{C},m)
\end{equation}
if $C_m\left\{b_k;n\right\}=O_{\textnormal{L}}(1)$. 
So, we have the following lemma.

\begin{lemma}
\label{ll1}
Let $m\in\mathbb{N}$.  If (\ref{l1eq23}) is satisfied, then
\begin{equation*}
\sum_{n\leq x}b_n\left(1-\frac{n}{x}\right)^{m-1}=O_{\textnormal{L}}(x) \ .
\end{equation*}
\end{lemma}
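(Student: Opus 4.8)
The plan is to connect the ``Cesàro one-sided boundedness'' hypothesis \eqref{l1eq23} with the weighted-sum conclusion by relating both to the Cesàro means of the sequence $\{b_n\}$. Write $s_n^{(m)} = \sum_{k=0}^{n}\binom{k+m-1}{m-1} b_{n-k}$ for the (unnormalized) $m$-th Cesàro sum, so that $C_m\{b_k;n\} = (m!/n^m) s_n^{(m)}$, and recall the standard identity relating the Riesz-type weighted sum $\sum_{n\leq x} b_n(1-n/x)^{m-1}$ to the $s_n^{(m)}$ via summation by parts (Abel partial summation applied $m-1$ times, or equivalently expanding $(1-n/x)^{m-1}$ by the binomial theorem in powers of $n/x$). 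The key is that $\sum_{n\leq x} b_n(1-n/x)^{m-1}$ is, up to the factor governing $x^{m-1}$ in the denominator, a nonnegative-coefficient combination of the partial sums $s_j^{(\ell)}$ for $\ell \le m$; more precisely, one has an integral/telescoping representation
\begin{equation*}
\sum_{n\leq x} b_n\left(1-\frac{n}{x}\right)^{m-1} = \frac{(m-1)!}{x^{m-1}} \int_{0}^{x}\left(\text{something nonnegative}\right)\mathrm{d}(\text{increments of }s^{(m-1)}),
\end{equation*}
which I will make precise using the classical equivalence between Riesz $(\mathrm{R},\{n\},m)$ means and Cesàro $(\mathrm{C},m)$ means cited in the excerpt (\cite{hardy,ingham}).

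First I would reduce to the case where $C_m\{b_k;n\} = (m!/n^m)s_n^{(m)} \geq -K$ for all large $n$ with $K>0$; equivalently $s_n^{(m)} \geq -Kn^m/m!$ for $n \ge N_0$, and by absorbing finitely many terms into an additive $O_{\textnormal{L}}$ constant we may take this for all $n$, and even arrange $s_n^{(m)} + Kn^m/m! \ge 0$, i.e. the auxiliary sequence $\tilde b_n = b_n + K(m!/m!)\cdot(\text{coefficient adjustment})$ — more cleanly, replace $b_n$ by $b_n$ plus a multiple of the sequence whose $m$-th Cesàro sum is exactly $\binom{n+m-1}{m-1}$-type, making $s_n^{(m)}$ nonnegative. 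Then $\sum_{n\le x} b_n(1-n/x)^{m-1}$ differs from $\sum_{n\le x}\tilde b_n(1-n/x)^{m-1}$ by a term that is $O(x)$ (this is where the polynomial correction of degree $\le m-1$ appears, integrated against the weight), hence it suffices to bound the latter from below by $-Cx$. Second, with $s_n^{(m)} \ge 0$, I would use the summation-by-parts identity to write $\sum_{n\le x} \tilde b_n(1-n/x)^{m-1}$ as an explicitly nonnegative expression plus lower-order terms: differencing $(1-n/x)^{m-1}$ produces weights of order $1/x$, so after pulling the $m$-th Cesàro sum out one is left with $\tilde b_n$ replaced by $s_n^{(m)}$ against a kernel that is $O(1/x)$ on $[0,x]$, giving a total contribution $\le C \cdot \max_n (s_n^{(m)}/n^m) \cdot x$-type bound — but since we only want a \emph{lower} bound and $s_n^{(m)}\ge 0$, the leading piece is automatically $\ge 0$, and only the boundary/correction terms of size $O(x)$ remain.

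The main obstacle I anticipate is bookkeeping the combinatorics of the repeated summation by parts cleanly enough that the nonnegativity is manifest: one must check that after transferring all $m-1$ differences onto the weight $(1-n/x)^{m-1}$, the resulting expression for $\sum_{n\le x} b_n (1-n/x)^{m-1}$ is genuinely of the form (nonnegative, from $s^{(m)}\ge 0$ times a nonnegative discrete kernel) $+$ ($O(x)$ error from the finitely many absorbed terms and from the polynomial adjustment sequence). The discrete kernel that multiplies $s_n^{(m)}$ is essentially the $m$-th forward difference of $(1-n/x)^{m-1}$ in $n$ — but $(1-n/x)^{m-1}$ is a polynomial of degree only $m-1$, so its $m$-th difference vanishes and the sum collapses to boundary terms near $n \approx x$, each of which is controlled by $s_x^{(m)} = O(x^m)$ divided by appropriate powers, yielding $O(x)$. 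This is really the Riesz–Cesàro equivalence theorem in disguise, so I would either invoke \cite{hardy,ingham} directly for the identity and then just track the one-sided inequality through it, or prove the needed finite identity by induction on $m$. Either way the conclusion $\sum_{n\le x} b_n(1-n/x)^{m-1} = O_{\textnormal{L}}(x)$ follows once the sign of the dominant term is pinned down.
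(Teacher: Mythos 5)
Your structural intuition — move onto the Cesàro sums $s_n^{(m)} = \sum_{k=0}^{n}\binom{k+m-1}{m-1}b_{n-k}$ and exploit that the $m$-th difference of the degree-$(m-1)$ weight $(1-n/x)^{m-1}$ vanishes, so the sum collapses onto a few terms near $n\approx x$ — is exactly right, but the way you then close the estimate contains a genuine error. After the $m$-th difference kills the interior sum, the "boundary terms near $n\approx x$" are not lower-order corrections: they \emph{are} the entire expression. You control them by invoking $s_x^{(m)}=O(x^m)$, but the hypothesis $b_n=O_{\textnormal{L}}(1)\ (\mathrm{C},m)$ gives only the one-sided lower bound $s_n^{(m)}\geq -Kn^m/m!$; there is no upper bound. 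Take $b_n=n$: the Cesàro means tend to $+\infty$, so the hypothesis holds vacuously, yet $\sum_{n\le x} n(1-n/x)^{m-1}\sim x^2/(m(m+1))$, which is nowhere near $O(x)$. So the step "boundary terms are of size $O(x)$" is false, and the conclusion of the lemma really is $O_{\textnormal{L}}(x)$ rather than $O(x)$.

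What the argument actually requires — and what the paper's proof supplies — is that in the identity from Hardy's Theorem 58 expressing the Riesz sum in terms of the last $m$ Cesàro sums,
\begin{equation*}
T_{m-1}(x)=\sum_{k=0}^{m-1} p_{k}^{m-1}(\vartheta)\,B_m(n-k),\qquad x=n+\vartheta,\ 0\le\vartheta<1,\ B_m(j)=j^mC_m\{b_k;j\},
\end{equation*}
the polynomial coefficients $p_k^{m-1}(\vartheta)$, generated by $(1-z)^m z^{-\vartheta}\bigl(z\,\tfrac{d}{dz}\bigr)^{m-1}\bigl(z^{\vartheta}/(1-z)\bigr)$, satisfy $p_k^{m-1}(\vartheta)\ge0$ for all $\vartheta\in[0,1]$. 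Only then does the one-sided bound $B_m(j)\ge -Kj^m$ transfer through to give $T_{m-1}(x)\ge -Cx^m$. This nonnegativity is not automatic: the paper establishes it by induction on $m$ from a recurrence satisfied by the $p_k^{m-1}$. You gesture at this at the end ("track the one-sided inequality through," "prove the needed finite identity by induction"), but you never isolate the nonnegativity of these coefficients as the decisive fact, and the incorrect $O(x)$ claim in the preceding sentence shows the gap was not actually recognized. Without this, the proposal does not constitute a proof.
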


\begin{proof}
We follow closely the proof of \cite[Thm. 58, p. 113]{hardy} and add new information.  Set $B_m(n)=n^{m}C_m\left\{b_k;n\right\}$.
Write $x=n+\vartheta$ with $0\leq\vartheta<1$ and $T_{m-1}(x)=\sum_{0\leq n\leq x}(n-k+\vartheta)^{m-1}b_k$.
We have to show that 
\begin{equation}
\label{l1eq26}
T_{m-1}(x)=O_{\textnormal{L}}(x^m) \ .
\end{equation}
As in \cite[p. 113]{hardy}, one shows that 
\begin{equation*}
T_{m-1}(x)=\sum^{m-1}_{k=0}p^{m-1}_{k}(\vartheta)B_m(n-k)\ ,
\end{equation*}
where each $p^{m-1}_{k}$ is a polynomial, and they are determined by 
\begin{equation*}
P_{m-1}\left(z,\vartheta\right)=\left(1-z\right)^mz^{-\vartheta}\left(z\frac{d}{dz}\right)^{m-1}\left(\frac{z^{\vartheta}}{1-z}\right)=\sum^{m-1}_{k=0}p^{m-1}_{k}(\vartheta)z^k .
\end{equation*}
Observe that if we show that $p^{m-1}_{k}(\vartheta)\geq0$ for all $\vartheta\in[0,1]$ and $0\leq k\leq m-1$, then (\ref{l1eq26}) would follow immediately.  Let us show the latter.  We proceed by induction over $m$.  The statement is clear for $m=1$ since $p_{0}^{0}(\vartheta)=1$.  Assume it for $m-1$. We then have  
\begin{align*}
P_m(z,\vartheta)&=(1-z)^{m+1}z^{-\vartheta}\left(z\frac{d}{dz}\right)^m\left(\frac{z^{\vartheta}}{1-z}\right)
\\
&
=(1-z)^{m+1}z^{1-\vartheta}\left(\frac{z^{\vartheta}}{(1-z)^m}P_{m-1}(z,\vartheta)\right)'
\\
&
=(1-z)zP'_{m-1}(z,\vartheta)+\left(\vartheta+(m-\vartheta)z\right)\:P_{m-1}(z,\vartheta)
\\
&
=\vartheta p^{m-1}_{0}(\vartheta)+(1-\vartheta)p^{m-1}_{m-1}(\vartheta)z^m
\\
&
\ \ \ +\sum^{m-1}_{k=1}\left(\left(k+\vartheta\right)p^{m-1}_{k}(\vartheta)+\left(m-k+1-\vartheta\right)p^{m-1}_{k-1}(\vartheta)\right)z^k \ ,
\end{align*}
thus, 
$$
p^{m}_{0}(\vartheta)=\vartheta p^{m-1}_{0}(\vartheta), \ \ \ p ^{m}_{m}(\vartheta)=(1-\vartheta)p^{m-1}_{m-1}(\vartheta)
$$ 
and 
$$p^{m}_{k}(\vartheta)=(k+\vartheta)p^{m-1}_{k}(\vartheta)+(m-k+1-\vartheta)p^{m-1}_{k-1}(\vartheta)\ , \ \ \ \mbox{for }1\leq k\leq m-1\ .$$  
Therefore, we clearly have $p^{m}_{k}(\vartheta)\geq 0$ for all $\vartheta\in[0,1]$ and $0\leq k\leq m$.
\end{proof}

On combining Corollary \ref{lc4} and Lemma \ref{ll1}, we obtain the ensuing result. It includes both Theorem \ref{ilth1} and Theorem \ref{lth1}.

\begin{corollary}
\label{lc5}
Suppose that $\sum^{\infty}_{n=0}c_nr^n$ is convergent for  $\left|r\right|<1$ and 
\begin{equation}
\label{l1eq27}
nc_n=O_{\textnormal{L}}(1) \ \ \ (\mathrm{C},m) \ .
\end{equation}
Then
\begin{equation}
\label{l1eq28}
F(r)=a+b\log\left(\frac{1}{1-r}\right)+o(1) \ , \ \ \ r\rightarrow 1^-,
\end{equation}
if and only if
\begin{equation}
\label{l1eq29}
\lim_{N\to\infty}\left(\sum^{N}_{n=0}c_n-b\log N\right)=a+b\gamma \ \ \ (\mathrm{C},m) \ .
\end{equation}
\end{corollary}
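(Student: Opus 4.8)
The plan is to specialize Corollary~\ref{lc4} to the case $\lambda_{n}=n$, to use Lemma~\ref{ll1} to verify its Tauberian hypothesis, and to obtain the reverse (Abelian) implication from Corollary~\ref{lc1}. Throughout it is convenient to work with the step function $s(x)=\sum_{n<x}c_{n}$, so that $\mathrm{d}s=\sum_{n=0}^{\infty}c_{n}\delta(\cdot-n)$ and $\mathcal{L}\{\mathrm{d}s;y\}=\sum_{n=0}^{\infty}c_{n}e^{-ny}=F(e^{-y})$; convergence of $\sum_{n=0}^{\infty}c_{n}r^{n}$ for $|r|<1$ forces $\limsup_{n}|c_{n}|^{1/n}\leq 1$, whence $e^{-yx}s\in\mathcal{S}'(\mathbb{R})$ for every $y>0$.

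I would first dispose of the hypotheses of Corollary~\ref{lc4} with $\lambda_{n}=n$. The summability requirement (\ref{l1eq18}) only asks that $\sum_{n=0}^{\infty}c_{n}e^{-ny}$ be summable $(\mathrm{R},\{n\})$ for each $y>0$; since $e^{-ny}=r^{n}$ with $0<r<1$, this is nothing but ordinary convergence of the power series, which is assumed. Under the substitution $r=e^{-y}$ one has $1-r=y+O(y^{2})$ as $y\to0^{+}$, so $\log(1/(1-r))=\log(1/y)+o(1)$, and therefore (\ref{l1eq28}) is equivalent to hypothesis (\ref{l1eq19}) of Corollary~\ref{lc4}. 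Finally, for $\lambda_{n}=n$ the Tauberian condition (\ref{l1eq20}) reads $\sum_{n\leq x}nc_{n}(1-n/x)^{m-1}=O_{\textnormal{L}}(x)$, which is precisely the conclusion of Lemma~\ref{ll1} for the sequence $b_{n}=nc_{n}$, whose hypothesis (\ref{l1eq23}) for that choice is exactly (\ref{l1eq27}).

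Granting (\ref{l1eq28}), Corollary~\ref{lc4} now produces
\begin{equation*}
\lim_{x\to\infty}\Bigl(\sum_{n\leq x}c_{n}\bigl(1-\tfrac{n}{x}\bigr)^{m}-b\log x\Bigr)=a+b\Bigl(\gamma-\sum_{k=1}^{m}\tfrac{1}{k}\Bigr).
\end{equation*}
The bracketed quantity is the Riesz mean of order $m$ of $\sum c_{n}$; applying the equivalence theorem for Riesz and Ces\`{a}ro summability \cite{hardy,ingham} to the auxiliary series $\sum(c_{n}-b/n)$ and using $C_{m}\{\log k;n\}=\log n-\sum_{k=1}^{m}\tfrac{1}{k}+o(1)$ (a consequence of (\ref{l1eq2}) and Euler--Maclaurin estimates), the displayed relation becomes $\lim_{n\to\infty}C_{m}\{s_{k}-b\log k;n\}=a+b\gamma$, which is (\ref{l1eq29}). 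For the converse implication, (\ref{l1eq29}) reads in the same way as $\lim_{x\to\infty}(s(x)-b\log x)=a+b\gamma$ $(\mathrm{C},m)$, and Corollary~\ref{lc1}, with the role of its ``$a$'' played by $a+b\gamma$, gives $\mathcal{L}\{\mathrm{d}s;y\}=a+b\log(1/y)+o(1)$, which in the variable $r=e^{-y}$ is again (\ref{l1eq28}).

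The one delicate point is the bookkeeping in the last paragraph: one has to line up the several logarithmic normalizations ($\log(1/(1-r))$ versus $\log(1/y)$, $\log x$ versus $\log N$) and keep careful track of the harmonic shift $\sum_{k=1}^{m}1/k$ and of the Euler constant $\gamma$ when passing among Riesz means of $\sum c_{n}$, Ces\`{a}ro means of the partial sums, and Ces\`{a}ro means of $s_{k}-b\log k$ --- or, equivalently, when peeling off the series $\sum b/n$, whose Riesz and Ces\`{a}ro means happen to differ by exactly $b\gamma$ in the limit. None of this is deep, but it is where constant errors are easiest to make.
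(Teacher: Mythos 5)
Your proposal is correct and follows essentially the same route as the paper: specialize Corollary~\ref{lc4} to $\lambda_{n}=n$, verify the hypothesis (\ref{l1eq20}) via Lemma~\ref{ll1}, pass between Riesz and Ces\`{a}ro means using the equivalence theorem together with $C_{m}\{\log k;n\}=\log n-\sum_{k=1}^{m}1/k+o(1)$, and supply the Abelian direction from Corollary~\ref{lc1}. The paper dispatches all of this in one line (``On combining Corollary~\ref{lc4} and Lemma~\ref{ll1}''), so your write-up simply makes explicit the bookkeeping the authors left to the reader; the only caveat is that the closing aside about the Riesz and Ces\`{a}ro means of $\sum b/n$ ``differing by exactly $b\gamma$'' is loosely phrased (they in fact agree when compared on the same footing; the $b\gamma$ arises from comparing the Riesz mean of $\sum b/n$ with $bC_{m}\{\log k;n\}$ rather than with $bC_{m}\{H_{k};n\}$), but this does not affect the validity of the main argument.
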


\end{document}